\DeclareMathOperator{\add}{add}
\DeclareMathOperator{\Ann}{Ann}
\DeclareMathOperator{\Aut}{Aut}
\DeclareMathOperator{\D}{\mathcal{D}}
\DeclareMathOperator{\DPic}{DPic}
\DeclareMathOperator{\End}{End}
\DeclareMathOperator{\Hom}{Hom}
\DeclareMathOperator{\id}{id}
\DeclareMathOperator{\Ker}{Ker}
\DeclareMathOperator{\Lt}{{}^{\textbf{L}}\otimes}
\DeclareMathOperator{\m}{\mathfrak{m}}
\DeclareMathOperator{\Max}{Maxspec}
\DeclareMathOperator{\p}{\mathfrak{p}}
\DeclareMathOperator{\Pic}{Pic}
\DeclareMathOperator{\RHom}{\textbf{R}Hom}
\DeclareMathOperator{\Spec}{Spec}
\DeclareMathOperator{\Supp}{Supp}
\DeclareMathOperator{\Z}{\mathbb{Z}}
\newcommand{\To}{\longrightarrow}
\numberwithin{equation}{section}
\theoremstyle{definition}
\newtheorem{thm}{Theorem}[section]
\newtheorem{prop}[thm]{Proposition}
\newtheorem{lem}[thm]{Lemma}
\newtheorem{cor}[thm]{Corollary}
\newtheorem{defn}[thm]{Definition}
\newtheorem{rk}[thm]{Remark}
\begin{document}

\title{Derived equivalences for a class of PI algebras}

\author{Quanshui Wu}
\address{School of Mathematical Sciences, Fudan University, Shanghai 200433, China}
\email{qswu@fudan.edu.cn}

\author{Ruipeng Zhu}
\address{Department of Mathematics, Southern University of Science and Technology, Shenzhen, Guangdong 518055, China}
\email{zhurp@sustech.edu.cn}

\begin{abstract}
	A description of tilting complexes is given for a class of PI algebras whose prime spectrum is canonically homeomorphic to the prime spectrum of its center. Some Sklyanin algebras are the kind of algebras considered. As an application, it is proved that any algebra derived equivalent to such kind of algebra, is Morita equivalent to it.
\end{abstract}
\subjclass[2020]{
	16D90,  
	16E35,  
    18E30
}

\keywords{Tilting complexes, derived equivalences, Morita equivalences, derived Picard groups}


\maketitle

%

\section*{Introduction}

Let $A$ be a ring, $P$ be a progenerator (that is, a finitely generated projective generator) of Mod-$A$, and $\{ e_1, \cdots, e_s \}$ be a set of central complete orthogonal idempotents of $A$. For any integers $s > 0$ and $n_1 > \cdots > n_s$, it is clear that $T:=\bigoplus\limits_{i=1}^{s}Pe_i[-n_i]$ is a tilting complex over $A$ and $\End_{\D^\mathrm{b}(A)}(T)$ is Morita equivalent to $A$. If $A$ is commutative, then any tilting complex over $A$ has the above form \cite[Theorem 2.6]{Ye99} or \cite[Theorem 2.11]{RouZi03}.

Recall that a ring $A$ is called an {\it Azumaya algebra}, if $A$ is separable over its center $Z(A)$, i.e., $A$ is a projective $ (A\otimes_{Z(A)}A^{op})$-module. Any two Azumaya algebras which are derived equivalent are Morita equivalent \cite{A17}. If $A$ is an Azumaya algebra,
then there exists a bijection between the ideals of $A$ and those of $Z(A)$ via $I \mapsto I \cap Z(A)$ \cite[Corollary II 3.7]{MI}. In this case, the prime spectrum of $A$ is canonically homeomorphic to the prime spectrum of $Z(A)$.

In this note, we prove a more general result.

\begin{thm}\label{tilting complex for finite over center case}
	Let $A$ be a ring, $R$ be a central subalgebra of $A$.  Suppose that $A$ is finitely presented as $R$-module, and $\Spec(A) \to \Spec(R), \; \mathfrak{P} \mapsto \mathfrak{P} \cap R$ is a homeomorphism. 
	Then  for any tilting complex $T$  over $A$, there exists a progenerator $P$ of $A$ and a set of complete orthogonal idempotents $e_1, \cdots, e_s$ in $R$ such that in the derived category $\mathcal{D}(A)$
	$$T \cong \bigoplus\limits_{i=1}^{s}Pe_i[-n_i]$$
for some integers $s>0$ and $n_1 > n_2 > \cdots > n_s$.
\end{thm}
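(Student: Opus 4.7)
The strategy is to exploit the spectral homeomorphism $\Spec(A) \cong \Spec(R)$ by localizing at each prime of $R$: at each local piece the tilting complex will be forced into a single cohomological degree, and we then glue via central idempotents of $R$ corresponding to a clopen decomposition of $\Spec(R)$.

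\emph{Local reduction.} For each $\mathfrak{p} \in \Spec(R)$, the central localization $A_\mathfrak{p} := A \otimes_R R_\mathfrak{p}$ is finitely presented over the local ring $R_\mathfrak{p}$, and the hypothesis restricts to a homeomorphism $\Spec(A_\mathfrak{p}) \xrightarrow{\sim} \Spec(R_\mathfrak{p})$. In particular $A_\mathfrak{p}$ is semiperfect with a unique maximal two-sided ideal $J(A_\mathfrak{p})$, and $A_\mathfrak{p}/J(A_\mathfrak{p})$ is simple Artinian; equivalently, $A_\mathfrak{p}$ has a unique indecomposable projective module $P_0$ up to isomorphism. Flatness of $R_\mathfrak{p}$ over $R$ makes $T_\mathfrak{p} := T \otimes_R R_\mathfrak{p}$ a tilting complex over $A_\mathfrak{p}$.

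\emph{Single-degree concentration.} I would represent $T_\mathfrak{p}$ by a minimal bounded complex of finitely generated projective $A_\mathfrak{p}$-modules, supported in some interval $[a, b]$, each term a direct sum of copies of $P_0$, and all differentials valued in $J(A_\mathfrak{p})$-multiples. Derived equivalences preserve $K_0$, and $K_0(A_\mathfrak{p}) = \mathbb{Z}$, so by Krull--Schmidt in $\mathcal{D}^{\mathrm{perf}}(A_\mathfrak{p})$ all indecomposable summands of $T_\mathfrak{p}$ are mutually isomorphic to a single $T_0$. If $a < b$, the chain maps $T_0 \to T_0[b-a]$ reduce to maps $P_0^{r_a} \to P_0^{r_b}$ between the top and bottom projectives, while homotopies factor through the differentials $d$ and hence land in $\Hom_{A_\mathfrak{p}}(P_0^{r_a}, J(A_\mathfrak{p})P_0^{r_b})$; the quotient surjects onto $\Hom_{A_\mathfrak{p}}(P_0^{r_a}, (P_0/JP_0)^{r_b}) \neq 0$, contradicting $\Hom_{\mathcal{D}}(T_0, T_0[b-a]) = 0$. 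Hence $a = b$, and $T_\mathfrak{p} \cong P_0^{\oplus r}[-n(\mathfrak{p})]$ for some $r \geq 1$ and $n(\mathfrak{p}) \in \mathbb{Z}$.

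\emph{Globalization.} Each cohomology module $H^i(T)$ is a finitely generated $R$-module (because $T$ is perfect over $A$ and $A$ is finitely presented over $R$), so $\Supp_R H^i(T)$ is closed; moreover only finitely many $H^i(T)$ are nonzero. The complement $V_n := \Spec(R) \setminus \bigcup_{i \neq n} \Supp_R H^i(T)$ is open and, since $T_\mathfrak{p}$ is a nonzero tilting complex at every point of $V_n$, every point of $V_n$ has $n(\mathfrak{p}) = n$; this shows $U_n := \{\mathfrak{p} : n(\mathfrak{p}) = n\}$ equals $V_n$ and is therefore open. The $U_n$ form a finite clopen partition of $\Spec(R)$ (by quasi-compactness), corresponding to integers $n_1 > \cdots > n_s$ and a complete set of central orthogonal idempotents $e_1, \ldots, e_s \in R$. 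Then $Te_i$ is concentrated in degree $n_i$; its cohomology $P_i := H^{n_i}(Te_i)$ is a projective $Ae_i$-module (projective after every localization on $\Spec(Re_i)$, hence projective by local-to-global for finitely presented modules) and a generator (because $Te_i$ generates $\mathcal{D}^{\mathrm{perf}}(Ae_i)$). Setting $P := \bigoplus_i P_i$ gives a progenerator of $A = \prod_i Ae_i$ with $Pe_i = P_i$, so $T \cong \bigoplus_i Pe_i[-n_i]$.

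\emph{Main obstacle.} The heart of the proof is the local single-degree concentration for an indecomposable summand of a tilting complex over the semilocal ring $A_\mathfrak{p}$. The chain-map-versus-homotopy computation is the essential technical content; it relies crucially on the Jacobson-radical minimality of the projective resolution and on the uniqueness of the simple $A_\mathfrak{p}$-module, which together detect a nontrivial class in $\Hom_{\mathcal{D}}(T_0, T_0[b-a])$ whenever $a < b$.
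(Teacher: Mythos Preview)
Your overall architecture (localize, concentrate in one degree, glue via a clopen partition of $\Spec R$) matches the paper's, and your globalization is essentially the paper's induction repackaged. The genuine gap is in the local step: you assert that $A_{\mathfrak p}$ is \emph{semiperfect}, and from this derive minimal complexes and Krull--Schmidt in $\mathcal{D}^{\mathrm{perf}}(A_{\mathfrak p})$. But $A_{\mathfrak p}$ need not be semiperfect under the stated hypotheses. The paper singles this out explicitly at the start of Section~2: for the PI Sklyanin algebras (the motivating examples), $A_{\mathfrak p}$ can be a domain whose quotient by its Jacobson radical is a full matrix ring, so idempotents do not lift and $A_{\mathfrak p}$ is not semiperfect. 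Thus your justification for the existence of a complex with radical differentials, and for Krull--Schmidt, collapses as written.

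The gap is repairable. First, the Krull--Schmidt / $K_0$ detour is unnecessary: your chain-map-versus-homotopy argument applies directly to $T_{\mathfrak p}$, not just to an indecomposable summand. Second, minimal complexes do exist over a ring $\Lambda$ with $\Lambda/J(\Lambda)$ simple Artinian even without idempotent lifting: since $J(M_n(\Lambda))=M_n(J(\Lambda))$, any lift of an element of $GL_n(\Lambda/J)$ is invertible in $M_n(\Lambda)$, so one can change bases to split off contractible summands until every differential has entries in $J(\End P_0)$. Once you supply this, your homotopy argument goes through. The paper instead bypasses minimal complexes entirely: its Lemma~1.6/Proposition~1.7 proves single-degree concentration from the edge of the spectral sequence of $T\otimes_A^{\mathbf L} T^{\vee}$ together with the fact that a tensor product of nonzero modules over a local ring (in the sense $\Lambda/J$ simple Artinian) is nonzero. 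That route needs neither semiperfectness nor minimality, which is why the paper records Lemma~2.1 (uniqueness of the indecomposable projective) separately rather than invoking semiperfect machinery.
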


\begin{cor}\label{derived equiv is Morita equiv for finite over center case}
	Let $B := \End_{\D^\mathrm{b}(A)}(T)$. Then $B \cong \End_{A}(P)$, i.e., $B$ is Morita equivalent to $A$.
\end{cor}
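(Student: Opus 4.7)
The plan is to invoke Theorem~\ref{tilting complex for finite over center case} to obtain the decomposition $T \cong \bigoplus_{i=1}^{s} Pe_i[-n_i]$ in $\D(A)$, and then compute $B$ directly from this decomposition. By additivity of $\Hom$ in each variable,
$$B \;\cong\; \bigoplus_{i,j=1}^{s} \Hom_{\D^\mathrm{b}(A)}\bigl(Pe_i[-n_i],\, Pe_j[-n_j]\bigr) \;\cong\; \bigoplus_{i,j} \Ext_A^{\,n_i-n_j}(Pe_i, Pe_j),$$
where the last identification uses that $Pe_i$ and $Pe_j$ are honest modules concentrated in degree zero. The strict chain $n_1 > \cdots > n_s$ then does the work of killing off-diagonal summands: for $i<j$ the exponent $n_i-n_j$ is positive and $Pe_i$ is projective (as a direct summand of the progenerator $P$), so the Ext vanishes; for $i>j$ the exponent is negative, and such Ext groups between modules are zero. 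Only the diagonal terms $\End_A(Pe_i)$ survive.

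Next I would match this up with $\End_A(P)$. Since $\{e_1,\dots,e_s\}$ is a complete set of orthogonal idempotents lying in $R \subseteq Z(A)$, we have $P = \bigoplus_{i=1}^{s} Pe_i$ as $A$-modules. Centrality forces cross Hom's to vanish: any $A$-linear map $f\colon Pe_i \to Pe_j$ satisfies $f(y) = f(y\cdot e_i) = f(y)\cdot e_i$ for $y \in Pe_i$, but $f(y) \in Pe_j$ and $Pe_j \cdot e_i = 0$ when $i\ne j$, so $f=0$. Therefore $\End_A(P) = \bigoplus_{i=1}^{s} \End_A(Pe_i)$, matching the computation above; this yields $B \cong \End_A(P)$, and since $P$ is a progenerator, $B$ is Morita equivalent to $A$.

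The argument is essentially mechanical once Theorem~\ref{tilting complex for finite over center case} is in hand, so there is no real obstacle here; the main point to keep track of is that the derived Hom computation genuinely reduces to ordinary Ext, which works because each $Pe_i$ is a single module in degree zero and is projective. All the heavy lifting has been done in proving the theorem itself.
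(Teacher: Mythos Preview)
Your argument is correct and is precisely the natural unpacking of the corollary; the paper states it without proof, treating it as immediate from Theorem~\ref{tilting complex for finite over center case}, and what you have written is exactly how one would fill in the details. The only minor remark is that the vanishing of the off-diagonal blocks already follows from your centrality argument $\Hom_A(Pe_i,Pe_j)=0$ for $i\neq j$, so the Ext computation via the strict inequalities $n_1>\cdots>n_s$ is in fact redundant (though not wrong).
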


Theorem \ref{tilting complex for finite over center case} and Corollary \ref{derived equiv is Morita equiv for finite over center case} apply to some class of Sklyanin algebras, see Corollary \ref{derived-equiv-Skl-alg}. The derived Picard group of $A$ is described in Proposition \ref{D-Pic-group}, which generalizes a result of Negron \cite{Negron17} about Azumaya algebras.

\section{Preliminaries}

Let $A$ be a ring, $\D^\mathrm{b}(A)$ be the bounded derived category of the (right) $A$-module category.
Let $T$ be a complex of $A$-modules, $\add(T)$ be the full subcategory of $\D^\mathrm{b}(A)$ consisting of objects that are direct summands of finite direct sums of copies of $T$, and  $\End_{\D^\mathrm{b}(A)}(T)$ be the endomorphism ring of $T$. A complex is called {\it perfect}, if it is quasi-isomorphic to a bounded complex of finitely generated projective $A$-modules.  $\mathcal{K}^{\mathrm{b}}(\text{proj-}A)$ is the full subcategory of $\D^{\mathrm{b}}(A)$ consisting of perfect complexes.

We first recall the definition of tilting complexes  \cite{Rickard89}, which generalizes the notion of progenerators. For the theory of tilting complexes we refer the reader to \cite{Ye20}.

In the following, $A$ and $B$ are associative ring.
\begin{defn}
	A complex $T \in \mathcal{K}^{\mathrm{b}}(\text{proj-}A)$ is called a {\it tilting complex} over $A$ if
	\begin{enumerate}
		\item $\add(T)$ generates $\mathcal{K}^{\mathrm{b}}(\text{proj-}A)$ as a triangulated category, and
		\item $\Hom_{\D^\mathrm{b}(A)}(T, T[n]) = 0$ for each $n \neq 0$.
	\end{enumerate}
\end{defn}

The following Morita theorem for derived categories, is due to Rickard.

\begin{thm}\cite[Theorem 6.4]{Rickard89}\label{derived-equivalence}
	The following are equivalent.
	\begin{enumerate}
		\item $\mathcal{D}^{\mathrm{b}}(A)$ and $\mathcal{D}^{\mathrm{b}}(B)$ are equivalent as triangulated categories.
		\item There is a tilting complex $T$ over $A$ such that $\End_{\D^\mathrm{b}(A)}(T) \cong B$.
	\end{enumerate}
\end{thm}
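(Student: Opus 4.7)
The plan is to prove both directions, noting (1)$\Rightarrow$(2) is short and (2)$\Rightarrow$(1) is the substantive content.

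For (1)$\Rightarrow$(2): given a triangle equivalence $F\colon \mathcal{D}^{\mathrm{b}}(B)\xrightarrow{\sim}\mathcal{D}^{\mathrm{b}}(A)$, set $T:=F(B_B)$. Perfectness of $T$ follows because perfectness can be characterized intrinsically in the triangulated category (as compactness in the ambient unbounded derived category, or as membership in the thick hull of a progenerator), and is therefore preserved by $F$. The vanishing $\Hom_{\mathcal{D}^{\mathrm{b}}(A)}(T,T[n])\cong\Hom_{\mathcal{D}^{\mathrm{b}}(B)}(B,B[n])$ equals $0$ for $n\ne 0$, and the endomorphism ring is $B$. Since $\add(B_B)$ generates $\mathcal{K}^{\mathrm{b}}(\text{proj-}B)$ as a triangulated subcategory and $F$ preserves triangulated hulls, $\add(T)$ generates $\mathcal{K}^{\mathrm{b}}(\text{proj-}A)$.

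For (2)$\Rightarrow$(1): let $T$ be a tilting complex over $A$ and $B:=\End_{\mathcal{D}^{\mathrm{b}}(A)}(T)$. First replace $T$ by a strict bounded complex of finitely generated projective $A$-modules; this endows $T$ with a natural $(B,A)$-bimodule structure on the nose, and makes $T$ a $K$-projective resolution of itself. Consider
\[
F := \RHom_A(T,-)\colon \mathcal{D}(A)\To \mathcal{D}(B).
\]
The main steps are: (i) $F(T)\cong B$ by construction of $B$, so $F$ restricts to an additive equivalence from $\add(T)$ to the category of finitely generated projective $B$-modules; (ii) using the vanishing $\Hom(T,T[n])=0$ for $n\ne 0$, a standard dévissage on the number of triangles required to build an object from shifted copies of $T$ shows that $F$ is fully faithful on the thick subcategory generated by $T$, which by the generation hypothesis is all of $\mathcal{K}^{\mathrm{b}}(\text{proj-}A)$; essential surjectivity onto $\mathcal{K}^{\mathrm{b}}(\text{proj-}B)$ follows because $F(\add(T))=\add(B)$ already generates the target. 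So far one has an equivalence $\mathcal{K}^{\mathrm{b}}(\text{proj-}A)\simeq \mathcal{K}^{\mathrm{b}}(\text{proj-}B)$.

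The main obstacle is extending this to the bounded derived categories, since $\mathcal{D}^{\mathrm{b}}(A)$ may contain non-perfect objects (already if $A$ does not have finite global dimension). I would handle this by working in the unbounded derived category: $F$ admits a left adjoint $-\otimes_B^{\mathbf{L}}T$, and the unit and counit of the adjunction are isomorphisms on the respective compact generators $B$ and $T$ by step (i), hence on the full triangulated hulls $\mathcal{D}(B)$ and $\mathcal{D}(A)$ (using that both sides of a unit/counit triangle that vanishes on a compact generator vanish on everything). Restricting this unbounded equivalence to objects with bounded cohomology yields $\mathcal{D}^{\mathrm{b}}(A)\simeq \mathcal{D}^{\mathrm{b}}(B)$. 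The most delicate bookkeeping lies in ensuring that the strict model chosen for $T$ gives compatible $A$- and $B$-actions so that $\RHom_A(T,-)$ and $-\otimes_B^{\mathbf{L}}T$ are genuinely adjoint derived functors rather than merely functors up to coherent isomorphism.
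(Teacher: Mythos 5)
The paper offers no proof of this statement: it is quoted from Rickard \cite{Rickard89}, so there is no internal argument to compare against, and your proposal has to be judged on its own terms. Your direction (1)$\Rightarrow$(2) is essentially correct, though the intrinsic characterization of perfectness you should invoke is the Hom--vanishing one ($X \in \D^{\mathrm{b}}(A)$ is perfect iff for every $Y \in \D^{\mathrm{b}}(A)$ one has $\Hom_{\D^{\mathrm{b}}(A)}(X, Y[n]) = 0$ for $|n| \gg 0$); ``compactness in the ambient unbounded derived category'' is not visible from $\D^{\mathrm{b}}$ alone, and ``thick hull of a progenerator'' is not an intrinsic notion.

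The genuine gap is in (2)$\Rightarrow$(1), at precisely the step you defer as ``delicate bookkeeping'': choosing a strict bounded complex of finitely generated projectives representing $T$ does \emph{not} endow it with a $(B,A)$-bimodule structure on the nose. The ring $B = \End_{\D^{\mathrm{b}}(A)}(T)$ is the degree-zero cohomology of the DG endomorphism algebra $E = \End^{\bullet}_A(T)$ of the chosen model; only $E$ acts strictly, an element of $B$ acts only up to homotopy, and there is in general no ring section $B \to Z^0(E)$. Without a strict action, $\RHom_A(T,-)$ does not take values in $\D(B)$, the adjoint pair is undefined, and the whole second half of your argument has nothing to run on. (This is exactly why the existence of two-sided tilting complexes --- Theorem 1.3 of the paper --- is a separate, later theorem of Rickard requiring flatness hypotheses.) The standard repairs are: (a) Keller's route --- run your compact-generation argument with $E$ in place of $B$, where $T$ genuinely is a DG $(E,A)$-bimodule, to get $\D(A) \simeq \D(E)$, and then observe that the truncation zigzag $B \twoheadleftarrow \tau_{\le 0}E \hookrightarrow E$ consists of quasi-isomorphisms of DG algebras because $H^n(E) = \Hom_{\D^{\mathrm{b}}(A)}(T, T[n]) = 0$ for $n \neq 0$, whence $\D(E) \simeq \D(B)$; or (b) Rickard's original route, which avoids bimodules entirely by constructing an equivalence $\mathcal{K}^{\mathrm{b}}(\text{proj-}B) \to \mathcal{K}^{\mathrm{b}}(\text{proj-}A)$ object-by-object and morphism-by-morphism from the vanishing $\Hom(T,T[n])=0$, and only then passing to $\D^{\mathrm{b}}$. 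Your remaining steps --- the d\'evissage for full faithfulness, the localizing-subcategory argument for the unit and counit, and the restriction to cohomologically bounded objects (justified because boundedness is detected by maps out of the compact generator) --- are sound once a genuine triangulated functor exists.
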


If $A$ and $B$ satisfy the equivalent conditions in Theorem \ref{derived-equivalence}, then $A$ is said to be {\it derived equivalent} to $B$.

The following theorem is also due to Rickard \cite[Theorem 1.6]{Ye99}, where $A$ and $B$ are two flat $k$-algebras over a commutative ring $k$.

\begin{thm}(Rickard)\label{two-side-tilting-complex}
	Let $T$ be a complex in $\D^\mathrm{b}(A \otimes_k B^{\mathrm{op}})$. The following are equivalent:
	
	(1) There exists a complex $T^{\vee} \in \D^\mathrm{b}(B \otimes_k A^{\mathrm{op}})$ and isomorphisms
		$$T \Lt_{A} T^{\vee} \cong B \text{ in } \D^\mathrm{b}(B^e)\, \text{ and }\, T^{\vee} \Lt_{B} T \cong A \text{ in } \D^\mathrm{b}(A^e)$$
where $B^e =B \otimes_k B^{\mathrm{op}}$ and $A^e =A \otimes_k A^{\mathrm{op}}$.

	(2) $T$ is a tilting complex over $A$, and the canonical morphism $B \to \Hom_{\D^\mathrm{b}(A)}(T,T)$ is an isomorphism in $\D^\mathrm{b}(B^e)$.
	
	In this case, $T$ is called a {\it two-sided tilting complex} over $A$-$B$ relative to $k$.
\end{thm}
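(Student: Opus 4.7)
The plan is to prove both implications by translating the ``two-sided tilting'' isomorphisms into a quasi-inverse pair of derived functors, and then reading off (or constructing) the tilting data from this equivalence. The main tools are associativity of $\Lt$, the identification $\RHom_A(T,N) \cong N \Lt_A \RHom_A(T,A)$ for perfect $T$, and careful bookkeeping of bimodule structures throughout.

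For (1) $\Rightarrow$ (2): the given isomorphisms and associativity of $\Lt$ show that $F := - \Lt_B T : \D(B) \to \D(A)$ and $G := - \Lt_A T^{\vee} : \D(A) \to \D(B)$ are quasi-inverse triangle equivalences. Since $B$ is a compact generator of $\D(B)$ and $F(B) \cong T$, the complex $T$ is a compact generator of $\D(A)$, whence $T \in \K^{\mathrm{b}}(\text{proj-}A)$ and $\add(T)$ triangle-generates $\K^{\mathrm{b}}(\text{proj-}A)$; moreover $\Hom_{\D^{\mathrm{b}}(A)}(T,T[n]) \cong \Hom_{\D^{\mathrm{b}}(B)}(B,B[n]) = 0$ for $n \neq 0$. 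For the bimodule isomorphism, the adjunction $(F, \RHom_A(T,-))$ together with $F$ being an equivalence yields $\RHom_A(T,T) \cong G(T) = T \Lt_A T^{\vee} \cong B$ in $\D^{\mathrm{b}}(B^e)$.

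For (2) $\Rightarrow$ (1): set $T^{\vee} := \RHom_A(T,A) \in \D^{\mathrm{b}}(B \otimes_k A^{\mathrm{op}})$. Perfectness of $T$ supplies a natural isomorphism $T \Lt_A T^{\vee} \cong \RHom_A(T,T)$, and the hypothesis identifies the right-hand side with $B$ in $\D^{\mathrm{b}}(B^e)$, producing the first required isomorphism. For the second, I would verify that $F := - \Lt_B T : \D(B) \to \D(A)$ is a triangle equivalence with right adjoint $\RHom_A(T,-) \cong - \Lt_A T^{\vee} =: G$ (again using perfectness of $T$). The unit $M \to GF(M)$ is an isomorphism at $M = B$ by the hypothesis $B \cong \RHom_A(T,T)$, and the counit is an isomorphism at $N = T$ by direct computation; a standard d\'evissage through triangles and coproducts --- propagated by the fact that $B$ generates $\D^{\mathrm{b}}(B)$ and $\add(T)$ generates $\K^{\mathrm{b}}(\text{proj-}A)$ --- extends this to all objects, so $F$ is an equivalence with quasi-inverse $G$. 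Applying $F \circ G \cong \id$ to $A \in \D^{\mathrm{b}}(A^e)$ and recognising the comparison map as the evaluation $T^{\vee} \Lt_B T \to A$ (which is $A$-bilinear by construction) then yields $T^{\vee} \Lt_B T \cong A$ in $\D^{\mathrm{b}}(A^e)$. The main obstacle is precisely this final upgrade: passing from an isomorphism of one-sided modules to an isomorphism of bimodules, which in Rickard's original argument is handled via DG-algebra replacements that allow the bimodule structure to be transported cleanly along the equivalence.
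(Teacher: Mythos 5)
The paper does not prove this theorem: it is quoted from Rickard (via \cite[Theorem 1.6]{Ye99}) and used as a black box, so there is no in-paper argument to compare yours against. Judged on its own terms, your outline follows the standard proof and has the right skeleton. In (1) $\Rightarrow$ (2), passing through the quasi-inverse pair $-\Lt_B T$, $-\Lt_A T^{\vee}$, transporting the compact generator $B$ to conclude that $T$ is perfect and generates $\mathcal{K}^{\mathrm{b}}(\text{proj-}A)$, and identifying $\RHom_A(T,-)$ with the quasi-inverse to get $\RHom_A(T,T)\cong B$ in $\D^{\mathrm{b}}(B^e)$ are all correct steps. In (2) $\Rightarrow$ (1), the isomorphism $T\Lt_A\RHom_A(T,A)\cong\RHom_A(T,T)\cong B$ from perfectness, and the d\'evissage showing $-\Lt_B T$ is an equivalence (unit checked on the generator $B$, counit on $T$, propagated through the localizing subcategories where they are isomorphisms), are likewise the standard route.

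The one place where your argument is not self-contained is exactly the point you flag at the end: upgrading $T^{\vee}\Lt_B T\cong A$ from an isomorphism in $\D(A)$ (which is all that $F\circ G\cong\id$ applied to $A$ gives) to one in $\D^{\mathrm{b}}(A^e)$. Asserting that the evaluation map is ``$A$-bilinear by construction'' glosses over the real issue: for the derived evaluation $\RHom_A(T,A)\Lt_B T\to A$ to exist as a morphism in $\D(A^e)$ at all, both the derived Hom and the derived tensor must be computed with resolutions of $T$ by complexes of \emph{bimodules} that are simultaneously K-projective (resp.\ K-flat) on the relevant one-sided structure. This is precisely where the standing hypothesis that $A$ and $B$ are flat over $k$ enters --- a K-projective resolution over $A\otimes_k B^{\mathrm{op}}$ restricts to a K-flat complex over $B$ --- and your proof never invokes that hypothesis. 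Once such resolutions are in place, bilinearity of evaluation is genuine and the map is an isomorphism because that can be tested after forgetting the left $A$-structure, i.e.\ by your one-sided computation. So: correct approach, crux correctly identified, but the decisive technical step is deferred to Rickard's DG-level argument rather than carried out.
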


We record some results about tilting complexes here for the convenience.

\begin{lem}\cite[Theorem 2.1]{Rickard91}\label{flat tensor products} 
	Let $A$ be an $R$-algebra and $S$ be a flat $R$-algebra where $R$ is a commutative ring. If $T$ is a tilting complex over $A$, then $T \otimes_R S$ is a tilting complex over $A \otimes_R S$.
\end{lem}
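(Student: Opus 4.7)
The plan is to verify the two defining properties of a tilting complex directly for $T' := T \otimes_R S$ viewed over $A' := A \otimes_R S$, by invoking standard flat base change. Since $T$ is perfect, I would first replace it by a bounded complex $P^{\bullet}$ of finitely generated projective right $A$-modules. For each finitely generated projective $A$-module $P$, the tensor product $P \otimes_R S \cong P \otimes_A A'$ is a finitely generated projective $A'$-module, so $P^{\bullet} \otimes_R S$ lies in $\K^{\mathrm{b}}(\text{proj-}A')$; flatness of $S$ over $R$ ensures that this complex represents $T \otimes_R S$ in the derived category.

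Next I would establish the $\Ext$-vanishing. For finitely generated projective right $A$-modules $P$ and $Q$, a reduction to the free case (writing $P, Q$ as direct summands of some $A^n$) yields a natural isomorphism $\Hom_A(P, Q) \otimes_R S \cong \Hom_{A'}(P \otimes_R S,\, Q \otimes_R S)$. Applying this componentwise gives an isomorphism of bounded complexes of $R$-modules
\[\HOM^{\bullet}_A(P^{\bullet}, P^{\bullet}) \otimes_R S \;\cong\; \HOM^{\bullet}_{A'}(P^{\bullet} \otimes_R S,\, P^{\bullet} \otimes_R S).\]
Since $S$ is $R$-flat, cohomology commutes with $-\otimes_R S$, so
\[\Hom_{\D(A')}(T',\, T'[n]) \;\cong\; \Hom_{\D(A)}(T, T[n]) \otimes_R S,\]
which vanishes for $n \neq 0$ by hypothesis.

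For the generation condition, the assumption that $\add(T)$ generates $\K^{\mathrm{b}}(\text{proj-}A)$ as a triangulated category means precisely that $A$ lies in the thick subcategory of $\D^{\mathrm{b}}(A)$ generated by $T$. The base-change functor $-\otimes_R S \colon \K^{\mathrm{b}}(\text{proj-}A) \to \K^{\mathrm{b}}(\text{proj-}A')$ is triangulated and preserves direct summands, so it sends this generation statement to the corresponding one over $A'$: namely, $A' = A \otimes_R S$ lies in the thick subcategory generated by $T'$, and hence $\add(T')$ generates $\K^{\mathrm{b}}(\text{proj-}A')$.

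The main obstacle is the Hom-base-change step: perfectness of $T$ is essential in order to represent it by a bounded complex of finitely generated projectives, so that $-\otimes_R S$ may be pulled through both $\HOM^{\bullet}$ and cohomology. Without perfectness one would at best obtain a base-change spectral sequence rather than the clean isomorphism used above.
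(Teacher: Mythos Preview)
Your argument is correct and is the standard verification. The paper itself gives no proof of this lemma; it merely records the statement with a citation to Rickard \cite[Theorem 2.1]{Rickard91}, so there is nothing to compare against beyond noting that your approach is essentially the one Rickard uses.
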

The following lemma follows directly from Lemma \ref{flat tensor products}.
\begin{lem}\cite[Proposition 2.6]{RouZi03}\label{decomposition of tilting complex by central idempotent}
	Let $T$ be a tilting complex over $A$. If $0 \neq e \in A$ is a central idempotent, then $Te$ is a tilting complex over $Ae$.
\end{lem}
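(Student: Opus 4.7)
The plan is to apply Lemma \ref{flat tensor products} with a well-chosen commutative base ring. Since $e$ is a central idempotent in $A$, it lies in $R := Z(A)$, so $A$ is naturally an $R$-algebra. Setting $S := Re$, regarded as a unital $R$-algebra with identity $e$ via the structure map $R \to S$, $r \mapsto re$, the decomposition $R = Re \oplus R(1-e)$ realizes $S$ as an $R$-module direct summand of the free module $R$. Hence $S$ is projective, and in particular flat, over $R$.

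Next I would identify the outputs of the lemma. Using that $e$ is central and idempotent, the canonical maps
\[
A \otimes_R S \to Ae,\quad a \otimes (re) \mapsto are, \qquad T \otimes_R S \to Te,\quad t \otimes (re) \mapsto tre
\]
are isomorphisms of rings and of complexes of right modules, respectively. (Equivalently, one may view $S$ as the quotient $R/R(1-e)$, in which case $A \otimes_R S = A/A(1-e) = Ae$ because $1-e$ is central, and similarly $T \otimes_R S = Te$.) Lemma \ref{flat tensor products} then yields directly that $Te \cong T \otimes_R S$ is a tilting complex over $A \otimes_R S \cong Ae$, which is exactly the claim.

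Since the statement is advertised as following ``directly'' from Lemma \ref{flat tensor products}, no real obstacle is anticipated. The only verification worth flagging is the identification $A \otimes_{Z(A)} Z(A)e \cong Ae$, which relies on the centrality of $e$ in an essential way; once this translation between the tensor-product picture and the ``multiply by $e$'' picture is in hand, the conclusion is immediate.
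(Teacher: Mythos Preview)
Your proposal is correct and matches the paper's approach exactly: the paper simply asserts that the lemma follows directly from Lemma \ref{flat tensor products}, and your argument spells out precisely the intended specialization $R = Z(A)$, $S = Re$, together with the identifications $A \otimes_R S \cong Ae$ and $T \otimes_R S \cong Te$.
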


The following lemma is easy to verify.

\begin{lem}\label{til-comp-lem}
	Let $T$ be a tilting complex over $A$. The following conditions are equivalent.
	
	(1) $T \cong \bigoplus\limits_{i=1}^{s}P_i[-n_i]$ in $\D^\mathrm{b}(A)$, for some $A$-module $P_i$ such that $\bigoplus\limits_{i=1}^{s} P_i$ is a progenerator of $A$.
	
	(2) $T$ is homotopy equivalent to $\bigoplus\limits_{i=1}^{s}P_i[-n_i]$ for some $A$-module $P_i$.
\end{lem}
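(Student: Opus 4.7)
The plan is to prove the equivalence by handling each direction separately; by grouping summands with equal shifts, we may assume $n_1, \ldots, n_s$ are pairwise distinct.

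For $(1) \Rightarrow (2)$, if $\bigoplus_{i} P_i$ is a progenerator then each $P_i$ is finitely generated projective, so $\bigoplus_{i} P_i[-n_i]$ already lies in $\mathcal{K}^{\mathrm{b}}(\text{proj-}A)$. Since $T$ also lies in $\mathcal{K}^{\mathrm{b}}(\text{proj-}A)$ and the canonical functor $\mathcal{K}^{\mathrm{b}}(\text{proj-}A) \to \D^{\mathrm{b}}(A)$ is fully faithful, the isomorphism in $\D^{\mathrm{b}}(A)$ is automatically represented by a homotopy equivalence of complexes.

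For $(2) \Rightarrow (1)$, set $X := \bigoplus_{i} P_i[-n_i]$ and choose chain maps $f\colon T \to X$, $g\colon X \to T$ with $fg$ chain-homotopic to $1_X$ and $gf$ chain-homotopic to $1_T$. Because $X$ has zero differentials, the chain homotopy equation in degree $n_j$ reduces to the on-the-nose identity $f^{n_j} g^{n_j} = 1_{P_j}$, which exhibits $P_j$ as a direct summand of the finitely generated projective module $T^{n_j}$; thus each $P_i$ is finitely generated projective. Writing $P := \bigoplus_{i} P_i$, it remains to show $P$ is a generator. Each $P_i[-n_i]$ is in $\add(T)$, so after shifting, each $P_i$ (placed in degree $0$) lies in the thick triangulated subcategory of $\mathcal{K}^{\mathrm{b}}(\text{proj-}A)$ generated by $\add(T)$; the tilting hypothesis says this subcategory is the whole of $\mathcal{K}^{\mathrm{b}}(\text{proj-}A)$ and therefore contains $A$. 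This thick subcategory coincides with $\mathcal{K}^{\mathrm{b}}(\add(P))$, so $A$ is homotopy equivalent to a bounded complex $C^\bullet$ of modules in $\add(P)$; since $A$ has no nonzero components outside degree $0$, the chain maps witnessing the equivalence satisfy $g^0 f^0 = 1_A$, realizing $A$ as a direct summand of $C^0 \in \add(P)$, whence $A \in \add(P)$ and $P$ is a progenerator.

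The only genuine work sits in the last step of $(2) \Rightarrow (1)$: identifying the thick subcategory of $\mathcal{K}^{\mathrm{b}}(\text{proj-}A)$ generated by the $P_i$ (in a fixed degree) with $\mathcal{K}^{\mathrm{b}}(\add(P))$, and then extracting $A$ as a summand of $C^0$ from the homotopy equivalence $A \simeq C^\bullet$. Both are classical but carry the real content of the lemma.
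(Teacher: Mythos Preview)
The paper omits the proof entirely, saying only that the lemma ``is easy to verify,'' so there is no argument to compare against. Your direction $(1)\Rightarrow(2)$ is fine, and your extraction of each $P_j$ as a summand of $T^{n_j}$ (using that $X$ has zero differential, so $fg=1_X$ on the nose) is the right way to get finite projectivity.

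There is, however, a genuine circularity in your generator argument. You assert that the thick subcategory of $\mathcal{K}^{\mathrm{b}}(\text{proj-}A)$ generated by the $P_i$ \emph{coincides} with $\mathcal{K}^{\mathrm{b}}(\add P)$, and then use this to place $A$ inside $\mathcal{K}^{\mathrm{b}}(\add P)$. But you have already shown this thick subcategory equals $\mathcal{K}^{\mathrm{b}}(\text{proj-}A)$; so your claimed coincidence is literally the statement $\mathcal{K}^{\mathrm{b}}(\text{proj-}A)=\mathcal{K}^{\mathrm{b}}(\add P)$, which is equivalent to $\add P=\text{proj-}A$, i.e.\ to $P$ being a generator --- exactly what you are trying to prove. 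The inclusion $\mathcal{K}^{\mathrm{b}}(\add P)\subseteq\text{thick}(P)$ is immediate, but the reverse inclusion would require $\mathcal{K}^{\mathrm{b}}(\add P)$ to be closed under homotopy direct summands in $\mathcal{K}^{\mathrm{b}}(\text{proj-}A)$, which is neither obvious nor what you want to be assuming here.

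The fix is small: you only need that $A$ is a homotopy direct \emph{summand} of some $C^\bullet$ whose terms lie in $\add P$, not that $A$ is homotopy \emph{equivalent} to such a complex. This follows because every object of $\text{thick}(T)$ is a summand of an object in the (non-thick) triangulated subcategory generated by $T$, and the latter consists of iterated cones of shifts of sums of $T\simeq\bigoplus P_i[-n_i]$, whose components are automatically finite direct sums of the $P_i$. Your final extraction step then goes through unchanged: from $\pi\iota\sim 1_A$ with $A$ concentrated in degree $0$ you still get $\pi^0\iota^0=1_A$ on the nose, exhibiting $A$ as a summand of $C^0\in\add P$.
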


By using a similar proof of \cite[Theorem 2.3]{Ye99} for  two-sided tilting complexes (see also \cite{Ye10} or \cite{Ye20}), the following result for tilting complexes holds.

\begin{lem}\label{equivalent condition for tilting complex being a progenerator}
	Let $T$ be a tilting complex over $A$, and $T^{\vee}:= \Hom_A(T, A)$. Let $n = \max\{ i \mid \mathrm{H}^i(T) \neq 0 \}$ and $ m = \max\{ i \mid \mathrm{H}^i(T^{\vee}) \neq 0 \}$. If $\mathrm{H}^n(T) \otimes_A \mathrm{H}^m(T^{\vee}) \neq 0$, then
	$m=-n$ and $T \cong P[m]$ in $\D^\mathrm{b}(A)$ for some progenerator $P$ of $A$.
\end{lem}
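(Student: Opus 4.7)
The plan is to trim $T$ to a minimal bounded complex of finitely generated projectives, identify $T\Lt_A T^\vee$ with $\RHom_A(T,T)$ via the classical trace pairing, and then match top cohomologies to force $T$ into a single degree.

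Up to homotopy equivalence I may represent $T$ by a bounded complex $T^p\to T^{p+1}\to\cdots\to T^n$ in $\K^{\mathrm{b}}(\text{proj-}A)$ from which no contractible summand can be split off at the top or bottom. Indeed, if $H^n(T)=0$, then $T^{n-1}\to T^n$ is a surjection onto a projective, hence splits, and one can split off a trivial complex $T^n\xrightarrow{\id}T^n$; dually, if $H^{-p}(T^\vee)=\Coker\Hom_A(d^p,A)=0$, then $d^p$ is split mono and a trivial complex splits off at the bottom. Hence after trimming, $n$ is the top cohomological degree of $T$ and $m=-p$ is the top cohomological degree of $T^\vee$.

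The key computation is the evaluation pairing
\[T\otimes_A T^\vee \;\longrightarrow\; \Hom_A^\bullet(T,T),\qquad t\otimes\phi \;\mapsto\; (s\mapsto t\phi(s)),\]
which is a chain isomorphism, because termwise it assembles the classical trace isomorphism $P\otimes_A\Hom_A(P,A)\cong\End_A(P)$ for finitely generated projective $P$. Since $T$ is a complex of projectives, the left side computes $T\Lt_A T^\vee$ and the right side computes $\RHom_A(T,T)$, so the tilting hypothesis yields $H^k(T\Lt_A T^\vee)=\Hom_{\D^{\mathrm{b}}(A)}(T,T[k])=0$ for every $k\neq 0$. On the other hand, by the trimmed shape of $T$ the top complex degree of $T\otimes_A T^\vee$ is $n+m=n-p$, consisting of the single summand $T^n\otimes_A\Hom_A(T^p,A)$; quotienting by the images of the two incoming differentials from $T^{n-1}\otimes_A\Hom_A(T^p,A)$ and $T^n\otimes_A\Hom_A(T^{p+1},A)$, and using right exactness of tensor, gives
\[H^{n+m}(T\otimes_A T^\vee) \;\cong\; H^n(T)\otimes_A\Coker\Hom_A(d^p,A) \;=\; H^n(T)\otimes_A H^m(T^\vee),\]
which is nonzero by hypothesis. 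Comparing these two statements forces $n+m=0$, so $p=n$ and $T\cong P[-n]$ with $P:=T^n$ finitely generated projective.

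Finally, since $\add(P[-n])$ generates $\K^{\mathrm{b}}(\text{proj-}A)$ as a triangulated category, $A$ is homotopy equivalent to a bounded complex $X^\bullet$ with entries in $\add P$; being itself projective and concentrated in degree $0$, $A$ splits off the contractible complement of $X^\bullet$ as a direct summand of $X^0\in\add P$, so $P$ is a progenerator of $A$. The main obstacle is the chain-level identification $T\otimes_A T^\vee\cong\Hom_A^\bullet(T,T)$ via the trace pairing, together with the careful bookkeeping of complex and cohomological degrees after trimming that makes the top cohomology of $T\otimes_A T^\vee$ land exactly at $n+m$ with value $H^n(T)\otimes_A H^m(T^\vee)$.
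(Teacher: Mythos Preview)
Your proof is correct and follows essentially the same approach as the paper: identify $\RHom_A(T,T)$ with $T\Lt_A T^\vee$, read off the top cohomology as $\mathrm{H}^n(T)\otimes_A \mathrm{H}^m(T^\vee)$, and use the tilting condition to force $n+m=0$. The only cosmetic differences are that you spell out the trimming and the top-cohomology computation explicitly (via right exactness of the tensor product) where the paper says ``without loss of generality'' and invokes a double-complex spectral sequence, and you argue directly that $P$ is a progenerator where the paper appeals to its Lemma~\ref{til-comp-lem}.
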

\begin{proof}
	Without loss of generality, we assume that
	$$T:= \cdots \longrightarrow 0 \longrightarrow T^{-m} \longrightarrow \cdots \longrightarrow T^n \longrightarrow 0 \longrightarrow \cdots.$$
	Since $T$ is a perfect complex of $A$-modules, $$\RHom_A(T,T) \cong T \Lt_{A} T^{\vee}.$$
	Consider the bounded spectral sequence of the double complex $T \otimes_A T^{\vee}$ (see \cite[Lemma 14.5.1]{Ye20}). Then
	$$\mathrm{H}^n(T) \otimes_A \mathrm{H}^{m}(T^{\vee}) \cong \mathrm{H}^{n+m}(T \Lt_{A} T^{\vee}) \cong \mathrm{H}^{n+m}(\RHom_A(T,T)).$$
	Since $T$ is a tilting complex, $\Hom_{\D^\mathrm{b}(A)}(T,T[i]) = 0$ for all $i \neq 0$. It follows that $n+m = 0$.
	So $T \cong T^n[-n]$.
	By Lemma \ref{til-comp-lem}, the conclusion holds.
\end{proof}

A ring $A$ is called {\it local} if $A/J_A$ is a simple Artinian ring, where $J_A$ is the Jacobson radical of $A$. If both $M_A$ and ${}_AN$ are nonzero module over a local ring $A$, then  $M \otimes_A N \neq 0$ by \cite[Lemma 14.5.6]{Ye20}.

\begin{prop}\cite[Theorem 2.11]{RouZi03}\label{derived equiv is Morita equiv for local rings}
	Let $A$ be a local ring,  $T$ be a tilting complex over $A$. Then, $T \cong P[-n]$ in $\D^\mathrm{b}(A)$, for some progenerator $P$ of $A$ and $n \in \mathbb{Z}$.
\end{prop}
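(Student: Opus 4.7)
The strategy is to verify the hypothesis of Lemma \ref{equivalent condition for tilting complex being a progenerator} and then read off the conclusion. Let $T^{\vee} = \Hom_A(T,A)$. As a tilting complex, $T \in \mathcal{K}^{\mathrm{b}}(\text{proj-}A)$ is nonzero and perfect, hence so is $T^{\vee}$ (as a complex of left $A$-modules). Consequently, both integers $n := \max\{i : \mathrm{H}^i(T) \neq 0\}$ and $m := \max\{i : \mathrm{H}^i(T^{\vee}) \neq 0\}$ are well defined.

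The key step is to check that $\mathrm{H}^n(T) \otimes_A \mathrm{H}^m(T^{\vee}) \neq 0$. Here $\mathrm{H}^n(T)$ is a nonzero right $A$-module and $\mathrm{H}^m(T^{\vee})$ is a nonzero left $A$-module. The locality assumption on $A$ enters precisely at this point: by the observation recorded immediately before the proposition (an instance of \cite[Lemma 14.5.6]{Ye20}), the tensor product over a local ring of a nonzero right module with a nonzero left module is itself nonzero. Applying this to $M = \mathrm{H}^n(T)$ and $N = \mathrm{H}^m(T^{\vee})$ yields the required nonvanishing.

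With the hypothesis in force, Lemma \ref{equivalent condition for tilting complex being a progenerator} gives $m = -n$ and $T \cong P[m] = P[-n]$ in $\D^\mathrm{b}(A)$ for some progenerator $P$ of $A$, which is exactly the statement. I do not anticipate a serious obstacle: the entire load is carried by the nonvanishing property of tensor products over local rings, already isolated above, and by Lemma \ref{equivalent condition for tilting complex being a progenerator}, which does the real work of producing the progenerator via the spectral sequence argument. The only mild point to keep track of is that $T$ (hence $T^{\vee}$) is genuinely nonzero, which is guaranteed because the thick closure of $T$ contains $A$ and hence all nonzero objects of $\mathcal{K}^{\mathrm{b}}(\text{proj-}A)$.
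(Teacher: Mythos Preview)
Your argument is correct and is exactly the paper's approach: the paper's proof consists of the single line ``It follows from Lemma \ref{equivalent condition for tilting complex being a progenerator},'' with the nonvanishing of $\mathrm{H}^n(T)\otimes_A \mathrm{H}^m(T^{\vee})$ supplied by the remark (citing \cite[Lemma 14.5.6]{Ye20}) immediately preceding the proposition. You have simply spelled out these implicit steps.
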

\begin{proof}
It follows from Lemma \ref{equivalent condition for tilting complex being a progenerator}.
\end{proof}

Let $\Spec A$ (resp., $\Max A$) denote the prime (resp., maximal) spectrum of a ring $A$.
Let $R$ be a central subalgebra of $A$. Since the center of any prime ring is a domain, the quotient ring $R/(\mathfrak{P} \cap R)$ is a domain for any $\mathfrak{P} \in \Spec(A)$. Then there is a well defined map $\pi: \Spec A \to \Spec R, \; \mathfrak{P} \mapsto \mathfrak{P} \cap R$.
The following facts about the map $\pi$ are well known, see \cite{Bla73} for instance. For the convenience of readers, we give their proofs here.

\begin{lem}\label{Phi-lem}
	Let $A$ be a ring, $R$ be a central subring of $A$. Suppose that $A$ is finitely generated as $R$-module.
	\begin{enumerate}
		\item For any primitive ideal $\mathfrak{P}$ of $A$, $\pi(\mathfrak{P})$ is a maximal ideal of $R$. In particular, $$\pi(\Max A) \subseteq \Max R.$$
		\item If $\mathfrak{P} \in \Spec(A)$ and $\pi(\mathfrak{P})$ is a maximal ideal of $R$, then $\mathfrak{P}$ is a maximal ideal of $A$.
		\item For any multiplicatively closed subset $\mathcal{S}$ of $R$, the prime ideals of $\mathcal{S}^{-1}A$ are in one-to-one correspondence ($\mathcal{S}^{-1} \mathfrak{P} \leftrightarrow \mathfrak{P}$) with the prime ideals of $A$ which do not meet $\mathcal{S}$.
		\item $\pi: \Spec A \to \Spec R$ is surjective. 
		\item The Jacobson radical $J_R$ of $R$ is equal to $J_A \cap R$.
		\item Let $\p$ be a prime ideal of $R$. Then $A_{\p}$ is a local ring if and only if there exists  only one prime ideal $\mathfrak{P}$ of $A$ such that $\pi(\mathfrak{P}) = \p$.
		
		\item If $R$ is a Jacobson ring (that is, $\forall$ $\p \in \Spec R$, $J_{R/\p} = 0$), then so is $A$.
		
		\item If $\pi$ is injective, then $\pi(\mathcal{V}(I)) = \mathcal{V}(I \cap R)$. In this case, $\pi$ is a homeomorphism.
	\end{enumerate}
\end{lem}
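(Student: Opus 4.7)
The plan is to treat the eight statements as a single package of lying-over / going-up / Jacobson-style facts for the module-finite central extension $R \subseteq A$, proving them in the given order and reusing each item in the subsequent ones. The underlying engine is Nakayama's lemma (applied to finitely generated modules over commutative rings), together with the fact that $A$ is automatically a PI ring, so any primitive quotient of $A$ is simple Artinian (Kaplansky).

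For (1), I would take a faithful simple $(A/\mathfrak{P})$-module $M$ and observe that $M$ is finitely generated over $R' := R/(\mathfrak{P}\cap R)$ because $A$ is. For any maximal ideal $\mathfrak{m}$ of $R'$, the submodule $M\mathfrak{m}$ is an $A$-submodule by centrality, hence $0$ or $M$; Nakayama on $M$ over $R'$ together with faithfulness forces $M\mathfrak{m}=0$, and then $\mathfrak{m}=0$ in $R'$, so $R'$ is a field. Part (2) follows because $A/\mathfrak{P}$ is a prime ring finite-dimensional over the field $R/\pi(\mathfrak{P})$, hence prime Artinian, hence simple Artinian. Part (3) is the standard central localization correspondence. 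For (4), I would localize at $\mathcal{S}=R\setminus\mathfrak{p}$; Nakayama gives $\mathfrak{p} A_{\mathfrak{p}}\neq A_{\mathfrak{p}}$, pick any maximal ideal $\mathfrak{M}$ of $A_{\mathfrak{p}}$ above $\mathfrak{p} A_{\mathfrak{p}}$, note $\mathfrak{M}\cap R_{\mathfrak{p}} = \mathfrak{p} R_{\mathfrak{p}}$ by (1), and contract via (3). For (5), (1) shows $J_A \cap R$ is an intersection of maximal ideals of $R$, while (4)+(2) show every maximal ideal of $R$ is the contraction of a (maximal, hence primitive) ideal of $A$; conversely $J_R$ is contained in every primitive ideal of $A$ via (1). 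For (6), by (3) the maximal ideals of $A_{\mathfrak{p}}$ correspond exactly to primes of $A$ contracting to $\mathfrak{p}$ (by (1)-(2) applied to $A_{\mathfrak{p}}, R_{\mathfrak{p}}$), so $A_{\mathfrak{p}}$ has a unique maximal ideal iff $\pi^{-1}(\mathfrak{p})$ is a singleton, in which case $A_{\mathfrak{p}}/J(A_{\mathfrak{p}})$ is a primitive PI ring and thus simple Artinian.

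For (7), I would reduce by dividing out $\mathfrak{P}$: it suffices to show that whenever $A$ is prime, finite over a Jacobson central domain $R$, one has $J(A)=0$. By (5), $J(A)\cap R = J(R)=0$. The key step is: any ideal $I$ of $A$ with $I\cap R = 0$ must vanish. Localizing at $R\setminus 0$ yields the prime PI algebra $A\otimes_R Q(R)$, which is finite over the field $Q(R)$ and therefore simple Artinian; then $I\otimes_R Q(R)$ is either all of $A\otimes_R Q(R)$ (forcing $I\cap R \ne 0$, contradiction) or zero (forcing $I$ to be $R$-torsion, which is impossible since every nonzero central element of a prime ring is a non-zerodivisor). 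Finally, (8) is a formal consequence: $\pi(\mathcal{V}(I)) \subseteq \mathcal{V}(I\cap R)$ is trivial, and the reverse inclusion follows by applying (4) to $A/I$ over $R/(I\cap R)$; injectivity plus (4) make $\pi$ a continuous closed bijection, hence a homeomorphism.

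The main obstacle I anticipate is step (7), where the "ideal meets the center nontrivially" statement in the prime PI setting must be handled carefully; the cleanest route is via Posner-style localization at $R\setminus 0$, combined with the elementary fact that nonzero central elements of a prime ring are non-zerodivisors. Every other item is a direct Nakayama or localization argument that chains naturally into (7) and (8).
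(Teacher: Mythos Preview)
Your proof is correct and follows essentially the same lying-over/localization strategy as the paper's. Two minor remarks: your appeals to PI/Kaplansky in (6) and (7) are superfluous, since in each case the prime ring in question is already finite-dimensional over a field by module-finiteness alone, hence Artinian and therefore simple (this is exactly how the paper argues); and your proof of (8) via applying (4) to $A/I$ over $R/(I\cap R)$ is actually cleaner than the paper's route through (6), as it shows $\pi(\mathcal{V}(I))=\mathcal{V}(I\cap R)$ without using injectivity, reserving that hypothesis only for the final homeomorphism claim.
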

\begin{proof}
	(1) Without loss of generality, we  assume that $A$ is a primitive ring. Let $V$ be a faithful simple $A$-module. For any $0 \neq x \in R$, since $Vx$ is also a non-zero $A$-module, $Vx = V$. Because $V$ is a finitely generated faithful $R$-module, $x$ is invertible in $R$. It follows that $R$ is a field. 
	
	(2) Since the prime ring $A/\mathfrak{P}$ is finite-dimensional over the field $R/(\mathfrak{P}\cap R)$, the quotient ring $A/\mathfrak{P}$ is a simple ring, that is, $\mathfrak{P}$ is a maximal ideal of $A$.
	
	(3) The proof is similar to the commutative case.
	
	(4) Suppose $\p \in \Spec R$. It follows from (3) that there exists a prime ideal $\mathfrak{P}$ of $A$ such that $\mathfrak{P}A_{\p}$ is a maximal ideal of $A_{\p}$. By (1), $\mathfrak{P}A_{\p} \cap R_{\p}$ is a maximal ideal of $R_{\p}$. Hence $\p R_{\p} \subseteq \mathfrak{P}A_{\p}$. It follows that $\mathfrak{P} \cap R = \p$.
	
	(5) By (1), $J_R \subseteq J_A \cap R$. On the other hand, by (4) and (2), $J_A \cap R \subseteq J_R$.
	
(6) It follows from (3) and (4) that $\Max A_{\p} = \{ \mathfrak{P}A_{\p} \mid \mathfrak{P} \cap R = \p, \, \mathfrak{P} \in \Spec A \}$. Then the conclusion in (6) follows.
	
	(7) Without loss of generality, we may assume that $A$ is a prime ring. Set $\mathcal{S} = R\setminus \{0\}$.
	Then $\mathcal{S}^{-1}A$ is also a prime ring which is finite-dimensional over the field $\mathcal{S}^{-1}R$. Hence $\mathcal{S}^{-1}A$ is an Artinian simple ring. It follows from that $R$ is a Jacobson ring and (5) that $J_A \cap R = J_R = 0$. Hence $\mathcal{S}^{-1}J_{A} \neq \mathcal{S}^{-1}A$, and so $\mathcal{S}^{-1}J_{A} = 0$. Hence $J_A = 0$, as every element in $\mathcal{S}$ is regular in the prime ring $A$. Therefore $A$ is a Jacobson ring.
	

	(8) Suppose $I$ is an ideal of $A$. Obviously, $\pi(\mathcal{V}(I)) \subseteq \mathcal{V}(I \cap R)$. On the other hand, let $\p \in \mathcal{V}(I \cap R) := \{ \p \in \Spec R \mid I \cap R \subseteq \p \}$. It follows from the assumption that $\pi$ is injective and (6) that $A_{\p}$ is a local ring. Hence there exists a prime ideal $\mathfrak{P}$ of $A$ such that $\mathfrak{P}A_{\p}$ is the only maximal ideal of $A_{\p}$ and $\p = \mathfrak{P} \cap R$. Since $I \cap (R \setminus \p) = \emptyset$, $IA_{\p} \subseteq \mathfrak{P} A_{\p}$ and $I \subseteq \mathfrak{P}$. Then $\p = \pi(\mathfrak{P}) \in \pi(\mathcal{V}(I))$. Hence $\pi(\mathcal{V}(I)) \supseteq \mathcal{V}(I \cap R)$. So $\pi$ is a closed map. Since $\pi$ is bijective, it is a homeomorphism.
\end{proof}

By Lemma \ref{Phi-lem}, we have the following two results, which describe the condition when $\pi$ is a homeomorphism.
\begin{lem}\label{spec-homeo}
	$\pi$ is a homeomorphism if and only if $A_{\p}$ is a local ring for any $\p \in \Spec R$.
\end{lem}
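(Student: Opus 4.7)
The plan is to derive both implications directly from the machinery already assembled in Lemma \ref{Phi-lem}, using (4), (6) and (8) in an essentially formal way. The key point is that $\pi$ is always surjective, so being a homeomorphism reduces (at the level of sets) to being injective, which by (6) translates fibre-by-fibre into the localizations $A_{\p}$ being local.

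For the ``if'' direction, I assume $A_{\p}$ is local for every $\p \in \Spec R$. By part (6) of Lemma \ref{Phi-lem}, this says exactly that over each $\p \in \Spec R$ there is at most one prime $\mathfrak{P} \in \Spec A$ with $\mathfrak{P} \cap R = \p$, i.e.\ $\pi$ is injective. Combined with the surjectivity from part (4), $\pi$ is bijective, and then part (8) upgrades this to a homeomorphism.

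For the ``only if'' direction, I assume $\pi$ is a homeomorphism. Then in particular $\pi$ is injective, so for every $\p \in \Spec R$ there is a unique prime $\mathfrak{P} \in \Spec A$ with $\mathfrak{P} \cap R = \p$. Applying part (6) again in the reverse direction yields that $A_{\p}$ is local.

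There is no real obstacle here: all the content has been packaged into Lemma \ref{Phi-lem}, and the present lemma is just the observation that ``$\pi$ is a homeomorphism'' and ``every fibre of $\pi$ is a singleton'' are the same condition given the existing surjectivity and closedness results. The only thing to take care of in the write-up is to cite parts (4), (6), (8) in the correct logical order rather than reproving anything.
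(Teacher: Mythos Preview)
Your proof is correct and matches the paper's approach exactly: the paper simply records this lemma as an immediate consequence of Lemma \ref{Phi-lem} without further detail, and your write-up just unpacks which parts (namely (4), (6), (8)) are being invoked.
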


\begin{prop}\label{Max-Prime}
	Suppose that $R$ is a Jacobson ring. If the restriction map $\pi|_{\Max A}$ is injective, then $\pi$ is a homeomorphism.
\end{prop}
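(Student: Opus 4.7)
The plan is to show that $\pi\colon\Spec A\to\Spec R$ is injective, and then invoke Lemma \ref{Phi-lem}(8) to conclude that it is a homeomorphism. Throughout, I use freely that $A$ is finitely generated as an $R$-module, so the hypotheses of Lemma \ref{Phi-lem} are in force; in particular, by Lemma \ref{Phi-lem}(7), $A$ is a Jacobson ring, so every prime of $A$ is the intersection of the maximal ideals containing it.

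Fix $\mathfrak{P}\in\Spec A$ and set $\mathfrak{p}=\pi(\mathfrak{P})=\mathfrak{P}\cap R$. The key step is to describe the set
\[
\mathcal{M}(\mathfrak{P}):=\{\,\mathfrak{M}\in\Max A\mid \mathfrak{M}\supseteq\mathfrak{P}\,\}
\]
purely in terms of $\mathfrak{p}$. One inclusion is immediate: if $\mathfrak{M}\supseteq\mathfrak{P}$, then $\mathfrak{M}\cap R\supseteq\mathfrak{p}$, and $\mathfrak{M}\cap R\in\Max R$ by Lemma \ref{Phi-lem}(1). For the converse, let $\mathfrak{p}'\in\Max R$ with $\mathfrak{p}'\supseteq\mathfrak{p}$. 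Apply Lemma \ref{Phi-lem}(4) to the central extension $R/\mathfrak{p}\hookrightarrow A/\mathfrak{P}$ (which still satisfies the finite-module hypothesis): the maximal ideal $\mathfrak{p}'/\mathfrak{p}$ of $R/\mathfrak{p}$ is the image of some prime $\mathfrak{P}'/\mathfrak{P}$ of $A/\mathfrak{P}$, and by Lemma \ref{Phi-lem}(2) this $\mathfrak{P}'$ is a maximal ideal of $A$ lying over $\mathfrak{p}'$ and containing $\mathfrak{P}$. Hence
\[
\mathcal{M}(\mathfrak{P})=\{\,\mathfrak{M}\in\Max A\mid\mathfrak{M}\cap R\supseteq\mathfrak{p}\,\}.
\]

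Now the hypothesis that $\pi|_{\Max A}$ is injective enters: the right-hand side is in bijection (via $\pi$) with $\{\mathfrak{p}'\in\Max R\mid\mathfrak{p}'\supseteq\mathfrak{p}\}$, so $\mathcal{M}(\mathfrak{P})$ depends only on $\mathfrak{p}$, not on the chosen prime $\mathfrak{P}$ over $\mathfrak{p}$. Since $A$ is Jacobson,
\[
\mathfrak{P}=\bigcap_{\mathfrak{M}\in\mathcal{M}(\mathfrak{P})}\mathfrak{M},
\]
and so $\mathfrak{P}$ is determined by $\mathfrak{p}$. This proves that $\pi$ is injective on $\Spec A$. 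By Lemma \ref{Phi-lem}(8), $\pi$ is a homeomorphism.

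The main obstacle is the nontrivial inclusion in the description of $\mathcal{M}(\mathfrak{P})$, i.e., lifting a maximal $\mathfrak{p}'\supseteq\mathfrak{p}$ of $R$ to a maximal of $A$ above $\mathfrak{P}$; this is a Cohen--Seidenberg--type going-up and is exactly where finiteness of $A$ over $R$ is used (through Lemma \ref{Phi-lem} applied to the quotient $A/\mathfrak{P}$ over $R/\mathfrak{p}$). Everything else is bookkeeping with Jacobson rings and the previous lemma.
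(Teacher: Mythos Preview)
Your approach is essentially the paper's, recast as a direct argument instead of a proof by contradiction: both use that $A$ is Jacobson (Lemma~\ref{Phi-lem}(7)), going-up for $A/\mathfrak{P}$ over $R/\mathfrak{p}$ (Lemma~\ref{Phi-lem}(2),(4)), and injectivity of $\pi|_{\Max A}$, then finish via Lemma~\ref{Phi-lem}(8).

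There is, however, a small logical slip in the order of your steps. The displayed equality
\[
\mathcal{M}(\mathfrak{P})=\{\,\mathfrak{M}\in\Max A\mid \mathfrak{M}\cap R\supseteq\mathfrak{p}\,\}
\]
is \emph{not} established by your ``converse'' paragraph: what you actually prove there is only that every $\mathfrak{p}'\in\Max R$ with $\mathfrak{p}'\supseteq\mathfrak{p}$ lies under \emph{some} element of $\mathcal{M}(\mathfrak{P})$, i.e.\ $\pi(\mathcal{M}(\mathfrak{P}))=\{\mathfrak{p}'\in\Max R\mid \mathfrak{p}'\supseteq\mathfrak{p}\}$. Without injectivity the reverse inclusion can fail (e.g.\ $A=k\times k$ over $R=k$). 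The fix is to fold injectivity into this step: given $\mathfrak{M}\in\Max A$ with $\mathfrak{M}\cap R=\mathfrak{p}'\supseteq\mathfrak{p}$, your going-up produces $\mathfrak{M}'\in\mathcal{M}(\mathfrak{P})$ with $\mathfrak{M}'\cap R=\mathfrak{p}'$, and then $\mathfrak{M}=\mathfrak{M}'$ by injectivity of $\pi|_{\Max A}$. With that reordering the equality holds, and since its right-hand side visibly depends only on $\mathfrak{p}$, your subsequent sentence about the bijection becomes redundant. The remainder of the argument ($\mathfrak{P}=\bigcap\mathcal{M}(\mathfrak{P})$ by the Jacobson property, hence $\pi$ injective, hence a homeomorphism by Lemma~\ref{Phi-lem}(8)) is correct.
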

\begin{proof}
	If $\pi$ is not injective, then there exist two different prime ideals $\mathfrak{P}$ and $\mathfrak{P}'$ of $A$ such that $\mathfrak{P} \cap R = \mathfrak{P}' \cap R$. By Lemma \ref{Phi-lem} (7),  $A$ is a Jacobson ring. Hence, there exists a maximal ideal $\mathfrak{M}$ of $A$ such that $\mathfrak{P} \subseteq \mathfrak{M}$ and $\mathfrak{P}' \nsubseteq \mathfrak{M}$, or the other way round. Without loss of generality, assume
	$\mathfrak{P} \subseteq \mathfrak{M}$ and $\mathfrak{P}' \nsubseteq \mathfrak{M}$. Then $\pi(\mathfrak{M}) = \mathfrak{M} \cap R \supseteq \mathfrak{P} \cap R = \mathfrak{P}' \cap R$.
	
	By applying Lemma \ref{Phi-lem} (2) and (4) to the ring $A/{\mathfrak{P}'}$ with the central subalgebra $R/{\mathfrak{P}' \cap R}$, it implies that there exists a maximal ideal $\mathfrak{M}'$ of $A$, such that $\mathfrak{P}' \subseteq \mathfrak{M}'$ and $\mathfrak{M}' \cap R = \mathfrak{M} \cap R$. This contradicts to the hypothesis that $\pi|_{\Max(A)}$ is injective. So $\pi$ is a homeomorphism.
\end{proof}

\section{Some Derived equivalences imply Morita equivalences}
If $A$ is a local ring such that $A/{J_A}$ is not a skew-field and $A$ is a domain, then $A$ is not semiperfect. The localizations of the Sklyanin algebra considered in Lemma \ref{S-domain} at maximal ideals of its center are such kind of examples. The following results are needed in the proof of Theorem \ref{tilting complex for finite over center case}.
\begin{lem}\label{lemma1}
	Let $A$ be a local ring. Then there exists an idempotent element $e$ in $A$, such that any finitely generated projective right $A$-module is a direct sum of finitely many copies of $eA$.
\end{lem}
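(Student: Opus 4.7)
The plan is to classify finitely generated projective right $A$-modules by their reductions modulo $J_A$, identify a minimal indecomposable projective $P_0$, and extract the desired idempotent from a decomposition of $A$ into copies of $P_0$.

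Writing $\bar A = A/J_A \cong M_n(D)$ for some division ring $D$, and letting $S$ denote the unique simple right $\bar A$-module, any finitely generated projective right $A$-module $P$ satisfies $P/PJ_A \cong S^{m(P)}$ for a unique integer $m(P) \ge 0$; this ``rank'' is additive under direct sums and $m(A) = n$. I would first show $m$ is a complete invariant: given $m(P) = m(Q)$, lift an isomorphism $P/PJ_A \to Q/QJ_A$ to $f\colon P \to Q$, observe that Nakayama applied to $\Coker f$ forces $f$ surjective, split $f$ via projectivity of $Q$ to obtain $P \cong \Ker f \oplus Q$, and then conclude $\Ker f/(\Ker f) J_A = 0$, so $\Ker f = 0$ by Nakayama. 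The same mechanism shows that any surjective endomorphism of a finitely generated projective right $A$-module is an isomorphism; I will call this fact $(\star)$.

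Next, set $m_0 := \min\{m(P) : P \text{ a nonzero f.g.\ projective}\}$ and fix $P_0$ with $m(P_0) = m_0$. For any f.g.\ projective $P$ with $m(P) \ge m_0$ I would lift a split inclusion $P_0/P_0 J_A \hookrightarrow P/PJ_A$ to $f\colon P_0 \to P$ and a retraction to $g\colon P \to P_0$; then $(gf - \id)(P_0) \subseteq P_0 J_A$ gives $P_0 = (gf)(P_0) + P_0 J_A = (gf)(P_0)$ by Nakayama, and $(\star)$ promotes $gf$ to an isomorphism, so $f$ is split injective. Induction on $m(P)$ then shows every f.g.\ projective right $A$-module is isomorphic to $P_0^{m(P)/m_0}$; in particular $m_0 \mid n$, so $A \cong P_0^{n/m_0}$. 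Thus $P_0$ is a direct summand of $A$, and writing $P_0 = eA$ for an idempotent $e \in A$ yields the claim.

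The main obstacle is that $A$ need not be semiperfect (as the paper observes for Sklyanin localizations), so the orthogonal primitive idempotents of $\bar A \cong M_n(D)$ cannot be lifted to $A$ directly. The plan circumvents this by classifying projectives by rank purely on the module side, and only producing a genuine idempotent $e \in A$ at the final step, from the already-established decomposition of $A$. The technical backbone is $(\star)$: the Nakayama-based surjective-implies-isomorphism lemma for f.g.\ projectives, which replaces the idempotent-lifting argument available in the semiperfect case and does not require $J_A$ to be nilpotent.
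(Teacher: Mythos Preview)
Your proposal is correct and follows essentially the same route as the paper: choose a nonzero finitely generated projective $P_0$ with $P_0/P_0J_A$ of minimal length, lift maps modulo $J_A$ using projectivity and Nakayama to show $P_0$ splits off from any nonzero finitely generated projective, and then run the division algorithm (your induction on $m(P)$) to conclude. The paper packages the lifting step as ``a surjection $P/PJ_A\twoheadrightarrow Q/QJ_A$ lifts to a surjection $P\twoheadrightarrow Q$, hence $Q$ is a summand of $P$,'' whereas you lift both a section and a retraction and invoke your fact $(\star)$; these are minor variations of the same Nakayama-based argument, and your write-up is simply more explicit about the details the paper leaves to the phrase ``division algorithm.''
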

\begin{proof}
	For any finitely generated projective $A$-modules $P$, $Q$, and a surjective $A$-module morphism $f: P/PJ_A \twoheadrightarrow Q/QJ_A$, there exists  a surjective $A$-module morphism $\widetilde{f}$ so that the following diagram commutative.
$$\xymatrix{
		P \ar@{-->>}[d]^{\widetilde{f}} \ar@{->>}[r] & P/PJ_A  \ar@{->>}[d]^{f}\\
		Q \ar@{->>}[r] & Q/QJ_A
	}$$

    It follows that $Q$ is a direct summand of $P$.

	There exists a finitely generated projective $A$-module $Q\neq 0$ such that the length of $Q/QJ_A$ is smallest possible.
	By the above fact and division algorithm, any finitely generated projective $A$-module is a direct sum of finite copies of $Q$. In particular, $Q$ is a direct summand of the $A$-module $A$. Hence, there exists an idempotent element $e$ in $A$ such that $Q \cong eA$ as $A$-modules.
\end{proof}

\begin{prop}\label{a open set of fp mod in spec} Let $A$ be a ring with a central subalgebra of $R$ such that $A$ is  finitely presented as $R$-module. Suppose that $A_{\p}$ is a local ring for all $\p \in \Spec R$.
	If $M$ is a finitely presented $A$-module, then
	$$U:= \{ \p \in \Spec R \mid M_{\p} \text{ is projective over } A_{\p} \}$$
	is an open subset in $\Spec R$.
\end{prop}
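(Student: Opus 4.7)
The plan is to show openness locally: for each $\p_0 \in U$, I would produce an element $r \in R \setminus \p_0$ such that the basic open set $D(r) := \{\mathfrak{q} \in \Spec R \mid r \notin \mathfrak{q}\}$ lies entirely inside $U$. Since the basic opens form a base for the topology of $\Spec R$, this will prove $U$ is open.

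To construct $r$, I would start from a finite generating set of $M$, giving a surjection $g: A^n \twoheadrightarrow M$ of $A$-modules. Localising at $\p_0$ produces a surjection $g_{\p_0}: A_{\p_0}^n \twoheadrightarrow M_{\p_0}$, and since $M_{\p_0}$ is projective by hypothesis, this surjection admits a splitting $s_0 \in \Hom_{A_{\p_0}}(M_{\p_0}, A_{\p_0}^n)$ with $g_{\p_0} s_0 = \id_{M_{\p_0}}$. The next step is to lift $s_0$ to a morphism defined on $M$ itself (up to a denominator in $R \setminus \p_0$): because $M$ is finitely presented as an $A$-module and $R_{\p_0}$ is $R$-flat, the standard five-lemma argument applied to $\Hom_A(-, A^n)$ and $\Hom_A(-, M)$ on a presentation $A^k \to A^\ell \to M \to 0$ yields canonical isomorphisms
\[ \Hom_A(M, A^n) \otimes_R R_{\p_0} \;\cong\; \Hom_{A_{\p_0}}(M_{\p_0}, A_{\p_0}^n), \qquad \End_A(M) \otimes_R R_{\p_0} \;\cong\; \End_{A_{\p_0}}(M_{\p_0}). \]
Hence $s_0$ can be represented as $s \otimes (1/u)$ for some $s \in \Hom_A(M, A^n)$ and $u \in R \setminus \p_0$, and the equation $g_{\p_0} s_0 = \id_{M_{\p_0}}$, transported to $\End_A(M) \otimes_R R_{\p_0}$, forces the existence of $v \in R \setminus \p_0$ killing $g \circ s - u \cdot \id_M$ in $\End_A(M)$. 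Setting $r := uv$ and $s' := vs$ then yields the \emph{global} identity $g \circ s' = r \cdot \id_M$ in $\End_A(M)$.

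To finish, for any prime $\mathfrak{q} \in D(r)$ the element $r$ is a unit in $R_{\mathfrak{q}}$, so $r^{-1} s'_{\mathfrak{q}} : M_{\mathfrak{q}} \to A_{\mathfrak{q}}^n$ splits $g_{\mathfrak{q}}$; consequently $M_{\mathfrak{q}}$ is a direct summand of $A_{\mathfrak{q}}^n$, and hence projective over $A_{\mathfrak{q}}$. This shows $D(r) \subseteq U$, completing the argument. The main technical obstacle is verifying the $\Hom$-localisation formula above in this non-commutative setting; it rests squarely on $M$ being finitely presented over $A$ together with flatness of $R_{\p_0}$ over $R$, and does not appear to require the local hypothesis on each $A_{\p}$ or Lemma \ref{lemma1}.
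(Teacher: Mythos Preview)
Your argument is correct, and it is genuinely different from the paper's. The paper proceeds by invoking the local hypothesis on $A_{\p}$ together with Lemma~\ref{lemma1} to write $M_{\p} \cong (eA_{\p})^{\oplus l}$ for a single idempotent $e = xs^{-1}$, then spreads this \emph{isomorphism} to a basic open set using Bourbaki's criterion; for that last step it needs $M$ to be finitely presented over $R$, which in turn forces the hypothesis that $A$ be finitely presented over $R$. You instead spread a \emph{splitting} of the surjection $A^n \twoheadrightarrow M$, and the Hom-localisation formula you use only requires $M$ to be finitely presented over $A$ and $R_{\p_0}$ flat over $R$. So, as you observe, your proof dispenses with both the local hypothesis on $A_{\p}$ and the finite-presentation hypothesis on $A_R$, yielding a strictly more general statement with a shorter argument. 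The paper's route does give slightly more concrete information (an explicit local model $(eA)^{\oplus l}$ for $M$ on an open set), but that extra structure is not needed downstream in the proof of Theorem~\ref{tilting complex for finite over center case}.
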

\begin{proof}
 Suppose $\p \in U$. Then $M_{\p}$ is a finitely generated projective $A_{\p}$-module. Since $A_{\p}$ is a local ring, by Lemma \ref{lemma1}, there exist $x \in A$ and $s \in R\setminus\p$, such that
	$$xs^{-1} \in A_{\p} \text{ is an idempotent element and } M_{\p} \cong (xs^{-1}A_{\p})^{\oplus l}$$
for some $l \in \mathbb{N}$.

	Hence there exists $t \in R \setminus \p$ such that $(x^2-xs)st = 0$, and $xs^{-1}$ is an idempotent element in $A_{st} = A[(st)^{-1}]$. So $xs^{-1}A_{st}$ is a projective $A_{st}$-module.
There is an $A_{st}$-module morphism  $g: (xs^{-1}A_{st})^{\oplus l} \to M_{st}$ such that $g_{\p}: (xs^{-1}A_{\p})^{\oplus l} \to M_{\p}$ is the prescribed isomorphism.
	
Since $M_A$ and $A_R$ are finitely presented modules, $M$ is a finitely presented $R$-module. 
	So $M_{st}$ is a finitely presented $R_{st}$-module.
	By \cite[Proposition II.5.1.2]{Bourbaki}, 
	there exists $u \in R \setminus \p$ such that $gu^{-1}: (xs^{-1}A_{stu})^{\oplus l} \to M_{stu}$ is an isomorphism.
	It follows that $M_{stu}$ is a projective $A_{stu}$-module.
	Hence $X_{stu} := \{\mathfrak{q} \in \Spec R \mid stu \notin \mathfrak{q} \}$ is contained in $U$. Obviously, $\p \in X_{stu}$. 
	So $U$ is an open subset of $\Spec R$.
\end{proof}


%

Now we are ready to prove Theorem \ref{tilting complex for finite over center case}. 
The following proof is nothing but an adaption of the arguments of Yekutieli \cite[Theorem 1.9]{Ye10} and Negron \cite[Proposition 3.3]{Negron17} to this situation.

\begin{proof}[Proof of Theorem \ref{tilting complex for finite over center case}]
	By assumption, $\Spec A \to \Spec R, \; \mathfrak{P} \mapsto \mathfrak{P} \cap R$ is a homeomorphism. It follows from Lemma \ref{spec-homeo} that $A_{\p}$ is a local ring for any prime ideal $\p$ of $R$.
	
	There exist integers $s>0$ and $n_1 > n_2 > \cdots > n_s$ such that $\mathrm{H}^{i}(T) = 0$ for all $i \neq n_1, \cdots, n_s$, as the tilting complex $T$ is bounded.
	Obviously, $\mathrm{H}^{n_1}(T)$ is a finitely generated $R$-module. In fact, it is a finitely presented $R$-module by assumption. Hence the support set $$\Supp(\mathrm{H}^{n_1}(T)) = \mathcal{V}(\Ann_R(\mathrm{H}^{n_1}(T)))$$
	is closed in $\Spec R$.
	By Lemma \ref{flat tensor products}, $T_{\p} := T \otimes_R R_{\p}$ is a tilting complex over $A_{\p}$. If $\mathrm{H}^{n_1}(T)_{\p} \cong \mathrm{H}^{n_1}(T_{\p})$ is non-zero, then $\mathrm{H}^{n_1}(T)_{\p}$ is $A_{\p}$-projective by Proposition \ref{derived equiv is Morita equiv for local rings}. Hence
	$$\Supp(\mathrm{H}^{n_1}(T)) = \{ \p \in \Spec R \mid \mathrm{H}^{n_1}(T)_{\p} \cong \mathrm{H}^{n_1}(T_{\p}) \text{ is a non-zero projective } A_{\p}\text{-module} \}.$$
	Since $\mathrm{H}^{n_1}(T)$ is a finitely presented $A$-module, $\mathrm{H}^{n_1}(T)$ is a projective $A$-module, and
$\Supp(\mathrm{H}^{n_1}(T))$ is an open set by Proposition \ref{a open set of fp mod in spec}.
	By \cite[page 406, Theorem 7.3]{Ja}, there exists an idempotent $e_1 \in R$ such that $\Supp(\mathrm{H}^{n_1}(T)) = X_{e_1}:= \{\mathfrak{q} \in \Spec R \mid e_1 \notin \mathfrak{q} \}$. It follows that $(\mathrm{H}^{n_1}(T)/\mathrm{H}^{n_1}(T)e_1)_{\p} = 0$ for all $\p \in \Spec R$. Hence
	$\mathrm{H}^{n_1}(T)(1-e_1) = 0$. Then
	$$\mathrm{H}^{j}(T(1-e_1)) = \begin{cases}
	0, & j \neq n_2,\cdots,n_s \\
	\mathrm{H}^{n_i}(T)(1-e_1), & j = n_2,\cdots,n_s
	\end{cases}.$$
	By Lemma \ref{decomposition of tilting complex by central idempotent}, $T(1-e_1)$ is a tilting complex over $A(1-e_1)$ and $Te_1$ is a tilting complex over $Ae_1$. It follows from Proposition \ref{derived equiv is Morita equiv for local rings} that $Te_1$ is homotopy equivalent to $\mathrm{H}^{n_1}(T)[-n_1]$.

By induction on $s$, there is a complete set of orthogonal idempotents $e_1, \cdots, e_s$ in $R$ such that $Te_i$ is homotopy equivalent to $\mathrm{H}^{n_i}(T)[-n_i]$ for each $i$. Then $T = \bigoplus\limits_{i=1}^{s} Te_i = \bigoplus\limits_{i=1}^{s} \mathrm{H}^{n_i}(T)[-n_i]$. It follows from Lemma \ref{til-comp-lem} that $P:= \bigoplus\limits_{i=1}^{s} \mathrm{H}^{n_i}(T)$ is a progenerator of $A$ and $T \cong \bigoplus\limits_{i=1}^{s}Pe_i[-n_i]$ in $\D^\mathrm{b}(A)$.
\end{proof}
\begin{cor}
	If $\Spec R$ is connected, then $T \cong P[-n]$ in $\D^\mathrm{b}(A)$.
\end{cor}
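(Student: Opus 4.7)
The plan is to invoke Theorem \ref{tilting complex for finite over center case} directly and then use the topological hypothesis to eliminate all but one summand. More precisely, the theorem gives
$$T \cong \bigoplus_{i=1}^{s} Pe_i[-n_i]$$
in $\D^\mathrm{b}(A)$ for some progenerator $P$, integers $n_1 > n_2 > \cdots > n_s$, and a set of complete orthogonal idempotents $e_1, \ldots, e_s$ of $R$. I will argue that when $\Spec R$ is connected we must have $s = 1$.

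First I would recall the standard fact that $\Spec R$ is connected if and only if the only idempotents of $R$ are $0$ and $1$. Next I would verify that each $e_i$ appearing in the decomposition of $T$ is nonzero. This can be read off directly from the proof of Theorem \ref{tilting complex for finite over center case}: the idempotent $e_i$ is chosen so that $\Supp(\mathrm{H}^{n_i}(T e_i))$ equals $X_{e_i} = \{\mathfrak{q} \in \Spec R \mid e_i \notin \mathfrak{q}\}$, and since $\mathrm{H}^{n_i}(T) \neq 0$ by the choice of the indices $n_1, \ldots, n_s$, this support set is nonempty, forcing $e_i \neq 0$.

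With every $e_i$ nonzero and $e_1 + \cdots + e_s = 1$, if $s \geq 2$ then $e_1$ is an idempotent with $e_1 \neq 0$ and $1 - e_1 = e_2 + \cdots + e_s \neq 0$, contradicting connectedness of $\Spec R$. Hence $s = 1$, so $e_1 = 1$, and setting $n = n_1$ we obtain $T \cong P[-n]$ in $\D^\mathrm{b}(A)$, as desired. No step here looks delicate; the only thing to be careful about is justifying that the $e_i$'s are nonzero, which is why I would explicitly point back to the construction in the proof of Theorem \ref{tilting complex for finite over center case}.
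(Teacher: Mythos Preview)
Your proposal is correct and follows the immediate deduction the paper has in mind (the corollary is stated without proof). One simplification: you do not need to argue that each $e_i$ is nonzero. Since $\Spec R$ is connected, the only idempotents of $R$ are $0$ and $1$; hence among complete orthogonal idempotents $e_1,\ldots,e_s$ with $\sum e_i = 1$, exactly one equals $1$ and the rest vanish, so the direct sum $\bigoplus_i Pe_i[-n_i]$ automatically collapses to a single term $P[-n_j]$.
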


\begin{cor}\cite{Negron17}\label{tilting complex over Azumaya algebra}
	Let $A$ be an Azumaya algebra. Then any tilting complex over $A$ has the form $P[n]$, where $P$ is a progenerator of $A$. If there exists a ring $B$ which is derived equivalent to $A$, then $B$ is Morita equivalent to $A$. In particular, $B$ is also an Azumaya algebra. 
\end{cor}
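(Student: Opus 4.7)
The plan is to apply Theorem \ref{tilting complex for finite over center case} with $R = Z(A)$ after verifying its hypotheses in the Azumaya setting, and then assemble Rickard's Morita theorem with the Morita-invariance of the Azumaya property. First I would check that both hypotheses of Theorem \ref{tilting complex for finite over center case} hold automatically for an Azumaya algebra. By the definition of Azumaya algebra together with the standard consequence of separability, $A$ is finitely generated projective, hence finitely presented, over $Z(A)$. The bijection $I \mapsto I \cap Z(A)$ between two-sided ideals of $A$ and ideals of $Z(A)$ recorded in the introduction (\cite[Corollary II 3.7]{MI}) restricts to the required homeomorphism $\Spec A \cong \Spec Z(A)$.

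With these hypotheses in place, Theorem \ref{tilting complex for finite over center case} gives, for any tilting complex $T$ over $A$, a decomposition $T \cong \bigoplus_{i=1}^{s} P e_i[-n_i]$ where $P$ is a progenerator of $A$ and $e_1,\dots,e_s$ are central orthogonal idempotents of $Z(A)$; when $\Spec Z(A)$ is connected this collapses to a single shift $P[-n]$ as stated, and otherwise the statement should be read via the full decomposition. Corollary \ref{derived equiv is Morita equiv for finite over center case} then identifies $\End_{\D^\mathrm{b}(A)}(T) \cong \End_A(P)$, which is Morita equivalent to $A$.

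For the second assertion I would combine this with Rickard's theorem (Theorem \ref{derived-equivalence}): if $B$ is derived equivalent to $A$, then $B \cong \End_{\D^\mathrm{b}(A)}(T)$ for some tilting complex $T$ over $A$, and the previous step identifies this with $\End_A(P)$, so $B$ is Morita equivalent to $A$. Finally, being an Azumaya algebra is a Morita-invariant property, since Morita equivalent rings have canonically isomorphic centers and the separability datum transports through the Morita context; hence $B$ is itself Azumaya. I do not anticipate a genuine obstacle here: the entire content has already been packaged into Theorem \ref{tilting complex for finite over center case}, and this corollary is simply its instantiation at Azumaya algebras combined with Rickard's theorem and classical facts about Azumaya algebras.
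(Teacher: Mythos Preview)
The paper states this corollary without proof; your deduction from Theorem~\ref{tilting complex for finite over center case}, Corollary~\ref{derived equiv is Morita equiv for finite over center case}, Rickard's theorem, and the Morita invariance of the Azumaya property is exactly the argument the paper intends. Your caveat about connectedness is also apt: the single-shift form $P[n]$ as literally stated presupposes $\Spec Z(A)$ connected (cf.\ the corollary immediately preceding it), and in general one should read it via the full decomposition $\bigoplus_i Pe_i[-n_i]$.
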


In the following we provide some non-Azumaya algebras which satisfy the conditions in Theorem \ref{tilting complex for finite over center case}.

\begin{defn}\cite{ATV90}
	Let $k$ be an algebraically closed field of characteristic $0$. The {\it three-dimensional Sklyanin algebras} $S = S(a, b, c)$ are $k$-algebras generated by three noncommutating variables $x,y,z$ of degree $1$, subject to relations
	$$axy+byx+cz^2 = ayz+bzy+cx^2 = azx+bxz+cy^2,$$
	for $[a:b:c] \in \mathbb{P}^2$ such that $(3abc)^3 \neq (a^3+b^3+c^3)^3$.

	The point scheme of the three-dimensional Sklyanin algebras $S$ is given by the elliptic curve
	$$E:= \mathcal{V}( abc(X^3+Y^3+Z^3) - (a^3+b^3+c^3)XYZ ) \subset \mathbb{P}^2.$$
	Let us choose the point $[1:-1:0]$ on $E$ as origin, and the automorphism $\sigma$ denotes the translation by the point $[a:b:c]$ in the group law on the elliptic curve $E$ with
	$$\sigma[x:y:z] = [acy^2-b^2xz:bcx^2-a^2yz:abz^2-c^2xy].$$
\end{defn}

\begin{lem} \label{S-domain}\cite{ATV90, ATV91}
	(1) $S$ is a Noetherian domain.

(2) $S$ is a finite module over its center if and only if the automorphism $\sigma$ has finite order.
\end{lem}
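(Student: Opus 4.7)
The plan is to attack part (1) by exhibiting a distinguished central element and reducing to a better-understood graded algebra. Artin--Tate--Van den Bergh construct a central element $g \in S_3$ (a specific cubic expression in $x,y,z$) and show that $S/(g) \cong B(E, \mathcal{L}, \sigma)$, the twisted homogeneous coordinate ring of the elliptic curve $E$ with respect to the line bundle $\mathcal{L} = \mathcal{O}_E(1)$ and the translation automorphism $\sigma$. One then uses the classical theory of twisted homogeneous coordinate rings of smooth projective varieties to conclude that $B(E, \mathcal{L}, \sigma)$ is a Noetherian domain. The properties then lift to $S$: Noetherianity lifts because $B$ is Noetherian and $(g)$ is finitely generated, while the domain property lifts once we know $g$ is a regular element on $S$.

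The key numerical input is the Hilbert series. Since $S$ is Artin--Schelter regular of global dimension $3$ with Hilbert series $1/(1-t)^3$, and since a direct computation on generators and relations shows that $B$ has the expected Hilbert series of a projective elliptic curve embedded by a degree-$3$ line bundle, the exact sequence
\[
0 \longrightarrow S(-3) \xrightarrow{\,\cdot g\,} S \longrightarrow S/(g) \longrightarrow 0
\]
forces $g$ to be a regular element; combined with centrality this is enough to propagate ``domain'' from $B$ to $S$.

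For part (2), the plan is to relate finiteness of $S$ over its center $Z(S)$ to the dynamics of $\sigma$ on $E$. On the quotient side, the center of $B(E, \mathcal{L}, \sigma)$ consists of elements built from $\sigma$-invariant sections of powers $\mathcal{L}^{\otimes N}$. If $\sigma$ has infinite order, then for each $N$ the space of $\sigma$-invariant sections of $\mathcal{L}^{\otimes N}$ is at most one-dimensional (constants), so $Z(B) = k$; hence $Z(S) \subseteq k[g]$ and $S/(g) = B$ is infinite-dimensional over $k$, which forces $S$ to be infinite over its center. Conversely, if $\sigma$ has finite order $n$, then every point of $E$ has a finite $\sigma$-orbit, $B$ is a finite module over its center $Z(B)$ (essentially a homogeneous coordinate ring of $E/\langle \sigma \rangle$), and lifting via $g$ shows that $S$ is finite over $Z(S)$.

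The main obstacle is the technical verification of the isomorphism $S/(g) \cong B(E, \mathcal{L}, \sigma)$; this requires identifying the point scheme of $S$ with $E$, producing enough central cubic relations, and matching Hilbert series to guarantee regularity of $g$. Once that identification is in hand, both statements follow from geometric facts about the elliptic curve and its automorphism $\sigma$, so I would quote or reprove those Hilbert series and centrality computations rather than rederive the full structure theory of three-dimensional Artin--Schelter regular algebras.
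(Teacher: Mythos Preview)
The paper does not supply its own proof of this lemma; it is quoted from \cite{ATV90, ATV91} and stated without argument. Your outline is essentially the strategy carried out in those references: the normal (in fact central) cubic element $g$, the identification $S/(g)\cong B(E,\mathcal{L},\sigma)$, the lifting of the Noetherian and domain properties along the regular central element $g$ via the graded structure, and the dichotomy on the order of $\sigma$ for the PI/finite-over-center question. So there is nothing to compare against in the present paper, and your plan matches the cited sources.

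One technical correction worth making before you write this up in detail. Your description of $Z\bigl(B(E,\mathcal{L},\sigma)\bigr)$ as ``$\sigma$-invariant sections of $\mathcal{L}^{\otimes N}$'' is not accurate: the degree-$d$ piece of $B$ is $H^0\bigl(E,\mathcal{L}\otimes\sigma^{*}\mathcal{L}\otimes\cdots\otimes(\sigma^{d-1})^{*}\mathcal{L}\bigr)$ and the product is $\sigma$-twisted, so centrality is a more delicate condition than mere $\sigma$-invariance of a section. The correct statement (proved in \cite{ATV91}) is still that $Z(B)=k$ when $\sigma$ has infinite order, and your deduction $Z(S)\subseteq k[g]$ then goes through by the graded peeling argument you indicate (using that $g$ is central and regular). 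Likewise, in the finite-order direction the Veronese $B^{(n)}$ for $n=|\sigma|$ is commutative and $B$ is finite over it; this is the cleaner way to phrase what you wrote about $E/\langle\sigma\rangle$. With these adjustments your proposal is a faithful summary of the Artin--Tate--Van den Bergh argument.
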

    Recently, Walton, Wang and Yakimov (\cite{WWY19}) endowed the three-dimensional Sklyanin algebra $S$, which is a finite module over its center $Z$, with a Poisson $Z$-order structure (in the sense of Brown-Gordon \cite{BrownGordon03}). By using the Poisson geometry of $S$, they analyzed all the irreducible representations of $S$. In particular, they proved the following result.

\begin{thm}\cite[Theorem 1.3.(4)]{WWY19}\label{spec-Skl-alg}
	If the order of $\sigma$ is finite and coprime with $3$, then $S/{\m S}$ is a local ring for any $\m \in \Max Z$.
\end{thm}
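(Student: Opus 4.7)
The plan is to reduce the locality of $S/\mathfrak{m}S$ to a uniqueness statement about irreducible representations of $S$ with central character $\mathfrak{m}$, and then attack that uniqueness via Brown--Gordon's Poisson $Z$-order machinery applied to the explicit Poisson geometry of the Sklyanin algebra. First, since $Z$ is an affine commutative $k$-algebra, Hilbert's Nullstellensatz gives $Z/\mathfrak{m}\cong k$; because $S$ is a finite $Z$-module (by Lemma \ref{S-domain}), $S/\mathfrak{m}S$ is a finite-dimensional $k$-algebra and hence Artinian. Such an algebra is local exactly when its semisimple quotient is a single matrix algebra, equivalently when $S$ admits, up to isomorphism, a unique irreducible representation with central character $\mathfrak{m}$.

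Next, following \cite{WWY19}, I would equip $S$ with its natural Poisson $Z$-order structure in the sense of Brown--Gordon \cite{BrownGordon03}. Concretely, one realizes $S$ as the specialization of a flat one-parameter deformation of a commutative graded algebra and extracts a Poisson bracket on $Z$ from the commutator in the deformation; the compatibility of this bracket with the multiplication in $S$ encodes the $Z$-order structure. The key consequence of Brown--Gordon's dimension theorem is that the dimensions of the irreducible representations of $S$ with central character $\mathfrak{m}$, as well as their number, depend only on the symplectic core of $\Max Z$ through which $\mathfrak{m}$ passes.

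The main step is then the explicit description of the symplectic core stratification of $\Max Z$. The generic stratum is the Azumaya locus, where $S/\mathfrak{m}S\cong M_n(k)$ with $n=|\sigma|$ the PI degree, hence simple and in particular local. For the remaining strata one must use the group law on the elliptic curve $E$ together with translation by $[a\!:\!b\!:\!c]$ to classify the lower-dimensional symplectic cores, count the irreducibles sitting over each one using the inequality $\dim_k (S/\mathfrak{m}S)/\mathrm{rad}=\sum_i d_i^2\le n^2$, and verify that only one summand survives for each such $\mathfrak{m}$. The hypothesis $\gcd(|\sigma|,3)=1$ enters precisely here, because the $3$-torsion of the Hesse cubic $E$ is its flex locus, and without this coprimality the $\sigma$-orbits on the flex points would degenerate and produce extra symplectic cores supporting multiple non-isomorphic irreducibles with the same central character.

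The hard part, and the genuine content of the argument, is this last step: the explicit enumeration of symplectic cores and the matching count of irreducible representations. The analysis in \cite{WWY19} carries it out by combining the twisted homogeneous coordinate ring description of $S$, the induced action of $\sigma$ on $E$, and the observation that when $3\nmid|\sigma|$ the orbits of $\sigma$ on the flex points retain full length, ruling out the degeneracies that would split the fiber of $\pi\colon\Max S\to\Max Z$ over a non-Azumaya point and thereby forcing $S/\mathfrak{m}S$ to be local at every $\mathfrak{m}\in\Max Z$.
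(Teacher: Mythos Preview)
The paper does not give its own proof of this statement: Theorem~\ref{spec-Skl-alg} is simply quoted from \cite[Theorem~1.3(4)]{WWY19} and then used as a black box in the proof of Corollary~\ref{derived-equiv-Skl-alg}. So there is no argument in the paper to compare your proposal against.

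That said, your sketch is a fair outline of the strategy actually used in \cite{WWY19}: reduce locality of $S/\mathfrak{m}S$ to uniqueness of the irreducible representation with central character $\mathfrak{m}$, endow $S$ with a Poisson $Z$-order structure, invoke the Brown--Gordon theorem to make the isomorphism class of $S/\mathfrak{m}S$ constant along symplectic cores, and then carry out an explicit stratification of $\Max Z$ using the geometry of $E$ and the orbits of $\sigma$. Your identification of where the hypothesis $\gcd(|\sigma|,3)=1$ enters is on the right track, though the details in \cite{WWY19} are substantially more involved than your last paragraph suggests; the actual enumeration of symplectic cores and the verification that each non-Azumaya fiber carries a unique irreducible is the technical heart of that paper and is not something one can fill in from the sketch alone. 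If you intend this as a standalone proof rather than a pointer to \cite{WWY19}, the gap is precisely that final step.
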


Here is a corollary following Theorem \ref{spec-Skl-alg} and Theorem  \ref{tilting complex for finite over center case}.
\begin{cor}\label{derived-equiv-Skl-alg}
	If the order of $\sigma$ is finite and coprime with $3$, then every tilting complex over $S$ has the form $P[n]$, where $P$ is a progenerator of $S$ and $n \in \mathbb{Z}$. Furthermore, any ring which is derived equivalent to $S$, is  Morita equivalent to $S$.
\end{cor}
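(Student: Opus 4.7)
The plan is to verify the hypotheses of Theorem \ref{tilting complex for finite over center case} for $A = S$ with central subalgebra $R = Z := Z(S)$, and then exploit the fact that $S$ is a domain to collapse the resulting decomposition into a single summand. Under the hypothesis that $\sigma$ has finite order, Lemma \ref{S-domain}(2) gives that $S$ is finite over $Z$. Because $S$ is a finitely generated $k$-algebra that is module-finite over $Z$, the (noncommutative) Artin--Tate lemma implies that $Z$ is a finitely generated $k$-algebra, hence a Noetherian Jacobson ring, and $S$ is consequently finitely presented as a $Z$-module.

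The crux is to show that $\pi \colon \Spec S \to \Spec Z$, $\mathfrak{P} \mapsto \mathfrak{P} \cap Z$, is a homeomorphism. Since $Z$ is Jacobson, Proposition \ref{Max-Prime} reduces this to the injectivity of $\pi|_{\Max S}$. Given $\m \in \Max Z$, Theorem \ref{spec-Skl-alg} asserts that $S/\m S$ is local. Any maximal ideal $\mathfrak{M}$ of $S$ with $\mathfrak{M} \cap Z = \m$ contains $\m S$ (as $\m$ is central) and therefore corresponds to a maximal ideal of $S/\m S$; since that quotient has a unique maximal ideal, $\mathfrak{M}$ is uniquely determined, yielding the required injectivity.

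With the homeomorphism established, Theorem \ref{tilting complex for finite over center case} produces, for any tilting complex $T$ over $S$, a progenerator $P$ of $S$, complete orthogonal idempotents $e_1, \ldots, e_s \in Z$, and integers $n_1 > \cdots > n_s$ such that $T \cong \bigoplus_{i=1}^s P e_i[-n_i]$ in $\D^\mathrm{b}(S)$. By Lemma \ref{S-domain}(1), $S$ is a domain, so $Z$ is a domain whose only idempotents are $0$ and $1$; among the orthogonal idempotents $e_1, \ldots, e_s$ summing to $1$, exactly one equals $1$ and the rest vanish, forcing $T \cong P[-n]$ for some $n \in \Z$. The Morita equivalence statement then follows immediately from Corollary \ref{derived equiv is Morita equiv for finite over center case}. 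In my view the only non-routine ingredient is the reduction of the homeomorphism condition to the locality of each $S/\m S$, supplied by Theorem \ref{spec-Skl-alg} from \cite{WWY19}; the remaining pieces are formal consequences of the main theorem together with the observation that a domain admits no nontrivial idempotents.
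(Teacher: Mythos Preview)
Your argument is correct and follows the same route as the paper: Artin--Tate gives $Z$ Jacobson and $S$ finitely presented over $Z$; Theorem~\ref{spec-Skl-alg} yields injectivity of $\pi|_{\Max S}$, so Proposition~\ref{Max-Prime} gives the homeomorphism; then Theorem~\ref{tilting complex for finite over center case} and Corollary~\ref{derived equiv is Morita equiv for finite over center case} finish. You are simply more explicit than the paper in two places---deducing injectivity of $\pi|_{\Max S}$ from locality of $S/\m S$, and collapsing the idempotent decomposition via the domain property of $Z$---both of which the paper leaves to the reader (the latter being the content of the unnamed corollary immediately following the proof of Theorem~\ref{tilting complex for finite over center case}).
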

\begin{proof}
	It follows from the Artin-Tate lemma that $Z$ is a finitely generated commutative $k$-algebra and so it is a Jacobson ring. Obviously, $S$ is a finitely presented $Z$-module. By Theorem \ref{spec-Skl-alg}, the restriction map $\pi|_{\Max(S)}$ is injective. It follows from Proposition \ref{Max-Prime} that $\pi: \Spec S \to \Spec Z$ is a homeomorphism. Then, the conclusions follow from Theorem \ref{tilting complex for finite over center case} and Corollary \ref{derived equiv is Morita equiv for finite over center case}.
\end{proof}

\section{Derived Picard groups}

Let $A$ be a projective $k$-algebra over a commutative ring $k$. The derived Picard group $\DPic(A)$ of an algebra $A$ was introduced by Yekutieli \cite{Ye99} and Rouquier-Zimmermann \cite{RouZi03} independently. In fact, $A$ can be assume to be flat over $k$, see the paragraph after Definition 1.1 in \cite{Ye10}.

\begin{defn}\label{defn of DPic}
	The {\it derived Picard group} of $A$ relative to $k$ is
	$$\DPic_k(A):= \frac{\{ \text{two-sided tilting complexes over } A \text{-} A \text{ relative to } k \}}{\text{ isomorphisms }},$$
	where the isomorphism is in $\D^\mathrm{b}(A\otimes_kA^{\mathrm{op}})$. The class of a tilting complex $T$ in $\DPic_k(A)$ is denoted by $[T]$. The group multiplication is induced by $- \Lt_A -$, and $[A]$ is the unit element.
\end{defn}

Let $T$ be a two-sided tilting complex over $A$-$A$ relative to $k$.  For any $z \in Z(A)$, there is an endomorphism of $T$ induced by the multiplication by $z$ on each component of $T$ (see \cite[Proposition 9.2]{Rickard89} or \cite[Propsition 6.3.2]{KonZimm98}). This defines a $k$-algebra automorphism of $Z(A)$, which is denoted by $f_T$.
The assignment $\Phi: \DPic_k(A) \to \Aut_k(Z(A)), [T] \mapsto f_T$ is a group morphism, see the paragraph in front of Definition 7 in \cite{Zim96} or \cite[Lemma 5.1]{Negron17}.


Let us first recall some definitions in \cite[Section 3.1]{Negron17}. Suppose $n \in \Gamma(\Spec Z(A), \underline{\mathbb{Z}})$, which consists of continuous functions from $\Spec Z(A)$ to the discrete space $\Z$. For any $i \in \mathbb{Z}$, $n^{-1}(i)$ is both an open and closed subset of $\Spec Z(A)$. Since $\Spec Z(A)$ is quasi-compact, there exists a set of complete orthogonal idempotents $e_{n_1}, \cdots, e_{n_s}$ of $Z(A)$ such that
$$n^{-1}(i) = \begin{cases}
X_{e_i}, & i = n_j, \, j = 1, \cdots, s \\
\emptyset, & i \neq n_1, \cdots, n_s.
\end{cases}$$
Set $e_i = 0$ for any $i \neq n_1, \cdots, n_s$, and $X_{e_i} = \emptyset$.

For any complex $T$ of $Z(A)$-modules, the shift $\Sigma^n T$ is defined by
$$\bigoplus_{i \in \Z, \, n^{-1}(i) = X_{e_i}}  Te_{i} [-i].$$

Let $\Pic_{Z(A)}(A)$ be the Picard group of $A$ over $Z(A)$. Clearly $\Gamma(\Spec Z(A), \underline{\mathbb{Z}}) \times \Pic_{Z(A)}(A)$ can be viewed as a subgroup of $\DPic_{Z(A)}(A)$ via $(n, [P]) \mapsto [\Sigma^n P]$.
The following result is proved in \cite{Negron17} under the assumption that $A$ is an Azumaya algebra.

\begin{prop}\label{D-Pic-group}
	Let $A$ be an $k$-algebra which is a finitely generated projective module over its center $Z(A)$. Suppose that 
	$\Spec A$ is canonically homeomorphic to $\Spec Z(A)$.
	Then

(1) there is an exact sequence of groups
	\begin{equation}\label{ex-seq-DPic}
	1 \To \DPic_{Z(A)}(A) \To \DPic_{k}(A) \stackrel{\Phi}{\To} \Aut_k(Z(A)).
	\end{equation}

(2) $\DPic_{Z(A)}(A) = \Gamma(\Spec Z(A), \underline{\mathbb{Z}}) \times \Pic_{Z(A)}(A)$.
\end{prop}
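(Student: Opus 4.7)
The plan is to handle (1) by standard manipulations with two-sided tilting complexes and (2) by applying Theorem \ref{tilting complex for finite over center case} to a $Z(A)$-central two-sided tilting complex viewed as a one-sided (right) tilting complex, and then promoting the resulting decomposition to the $A$-bimodule level. For (1), I would first observe that every two-sided tilting complex over $A$-$A$ relative to $Z(A)$ is, in particular, one relative to $k$, giving a natural map $\iota: \DPic_{Z(A)}(A) \to \DPic_k(A)$. Its image lies in $\ker\Phi$ because a $Z(A)$-central complex $T$ has coincident left and right $Z(A)$-actions, forcing $f_T = \mathrm{id}_{Z(A)}$. Conversely, if $f_T = \mathrm{id}$ then the two $Z(A)$-actions on $T$ agree in $\D^{\mathrm{b}}(A \otimes_k A^{\mathrm{op}})$, so $T$ is represented by a complex in $\D^{\mathrm{b}}(A \otimes_{Z(A)} A^{\mathrm{op}})$. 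Injectivity of $\iota$ follows from the fact that an $A$-bilinear map between two $Z(A)$-central complexes is automatically $Z(A)$-bilinear; reducing via Theorem \ref{tilting complex for finite over center case} to isomorphisms between the single-degree cohomology sheaves, any isomorphism $T \cong T'$ in $\D^{\mathrm{b}}(A \otimes_k A^{\mathrm{op}})$ between $Z(A)$-central tilting complexes lifts to one in $\D^{\mathrm{b}}(A \otimes_{Z(A)} A^{\mathrm{op}})$.

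For (2), let $T$ represent a class in $\DPic_{Z(A)}(A)$. Viewing $T$ as a complex of right $A$-modules and applying Theorem \ref{tilting complex for finite over center case} with $R = Z(A)$, there exist complete orthogonal idempotents $e_1, \dots, e_s \in Z(A)$, integers $n_1 > \cdots > n_s$, and a progenerator $P$ of $A$ such that $T \cong \bigoplus_{i=1}^s P e_i[-n_i]$ in $\D^{\mathrm{b}}(A)$ with $\mathrm{H}^{n_i}(T) = P e_i$. Since $e_i \in Z(A)$, the summand $T e_i$ is an $A$-bimodule direct summand of $T$ and a two-sided tilting complex over $A e_i$-$A e_i$ relative to $Z(A) e_i$, with cohomology concentrated in degree $n_i$. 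The canonical truncation then yields a bimodule quasi-isomorphism $T e_i \simeq P e_i[-n_i]$ in $\D^{\mathrm{b}}(A e_i \otimes_{Z(A) e_i} (A e_i)^{\mathrm{op}})$. Each $P e_i$, being a two-sided tilting complex concentrated in a single degree, is then an invertible $A e_i$-bimodule over $Z(A) e_i$: by Theorem \ref{two-side-tilting-complex} its inverse is again a two-sided tilting complex, which by a second application of Theorem \ref{tilting complex for finite over center case} is concentrated in a single degree, and the relation $P e_i \Lt_{A e_i} (P e_i)^{\vee} \cong A e_i$ forces that degree to be zero. Setting $n \in \Gamma(\Spec Z(A), \underline{\Z})$ equal to $n_i$ on $X_{e_i}$ and $P := \bigoplus_i P e_i \in \Pic_{Z(A)}(A)$, we conclude $[T] = [\Sigma^n P]$; injectivity of $(n,[P]) \mapsto [\Sigma^n P]$ is immediate from reading off $n$ and the $\mathrm{H}^{n_i}$ from the cohomological support.

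The main obstacle is the step in (2) of promoting the one-sided decomposition of Theorem \ref{tilting complex for finite over center case} to a bimodule decomposition in $\D^{\mathrm{b}}(A \otimes_{Z(A)} A^{\mathrm{op}})$. What makes this clean is that the idempotents produced in that theorem lie in the centre $R = Z(A)$ and therefore act identically from the left and from the right on any $A$-bimodule, so each $T e_i$ is automatically a bimodule summand; because its cohomology sits in a single degree, the canonical truncation finishes the job. A secondary technical point is verifying that each $P e_i$ is invertible as a bimodule rather than merely one-sided projective, which is handled by Theorem \ref{two-side-tilting-complex} together with the degree count above.
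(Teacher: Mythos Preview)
Your proposal is essentially correct, but part~(2) is argued along a genuinely different line from the paper.

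The paper, after obtaining the one-sided isomorphism $T \cong \Sigma^n P$ in $\D^{\mathrm b}(A)$ from Theorem~\ref{tilting complex for finite over center case}, does \emph{not} pass through truncations on the idempotent pieces. Instead it observes that the canonical map $A \to \End_{\D^{\mathrm b}(A)}(T) \cong \End_A(P)$ (coming from condition~(2) of Theorem~\ref{two-side-tilting-complex}) is an isomorphism, so $P$ is already an invertible $A$-bimodule with central $Z(A)$-action. It then invokes \cite[Proposition~2.3]{RouZi03}: any two-sided tilting complex that is isomorphic to $\Sigma^n P$ as a right complex is isomorphic to ${}^{\sigma}(\Sigma^n P)$ in $\D^{\mathrm b}(A^e)$ for some $\sigma \in \Aut_k(A)$. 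The hypothesis $\Phi([T]) = \id_{Z(A)}$ forces $\sigma|_{Z(A)} = \id$, hence $\sigma \in \Aut_{Z(A)}(A)$ and $T \cong \Sigma^n({}^{\sigma}P)$ with $[{}^{\sigma}P] \in \Pic_{Z(A)}(A)$.

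Your route---splitting $T$ by the central idempotents $e_i$, then using that each $Te_i$ has cohomology in a single degree so that truncation gives a bimodule quasi-isomorphism $Te_i \simeq Pe_i[-n_i]$---is more elementary and avoids the citation to Rouquier--Zimmermann. One small point: your invertibility argument for $Pe_i$ is more circuitous than necessary. Since the left action gives a ring isomorphism $Ae_i \xrightarrow{\sim} \End_{Ae_i}(Pe_i)$ (again by Theorem~\ref{two-side-tilting-complex}(2)), Morita theory immediately gives that $Pe_i$ is an invertible $Ae_i$-bimodule, without appealing a second time to Theorem~\ref{tilting complex for finite over center case} on the dual. Also note that your phrase ``concentrated in a single degree'' for $(Pe_i)^{\vee}$ is not what Theorem~\ref{tilting complex for finite over center case} literally yields; you really use the pairing with $Pe_i$ (a right progenerator, hence faithfully flat) to force all cohomology of $(Pe_i)^{\vee}$ into degree $0$.
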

\begin{proof}
	It is obvious that $\Ker \Phi = \DPic_{Z(A)}(A)$. Hence \eqref{ex-seq-DPic} is a exact sequence of groups. Next we  prove $\DPic_{Z(A)}(A) = \Gamma(\Spec Z(A), \underline{\mathbb{Z}}) \times \Pic_{Z(A)}(A)$.
	
	Let $T$ be a two-sided tilting complex over $A$ relative to $k$. By Theorem \ref{tilting complex for finite over center case}, there exists a global section $n \in \Gamma(\Spec Z(A), \underline{\mathbb{Z}})$ and an $A$-progenerator $P$, such that $T \cong \Sigma^nP$ in $\D^\mathrm{b}(A)$.
	It follows from the fact that
	$$A \cong \End_{\D^\mathrm{b}(A)}(T) \cong \End_{\D^\mathrm{b}(A)}(\Sigma^nP) = \End_{\D^\mathrm{b}(A)}(P) = \End_A(P)$$
	that $P$ is an invertible $A$-$A$-bimodule with central $Z(A)$-action. By \cite[Proposition 2.3]{RouZi03}, there exists an automorphism $\sigma \in \Aut_k(A)$ such that $T \cong {^{\sigma} (\Sigma^nP)}$ in $\D^\mathrm{b}(A^e)$. If $\Phi([T]) = \id_{Z(A)}$, then $\sigma \in \Aut_{Z(A)}(A)$. Hence $T \cong \Sigma^n({^{\sigma}P})$ in $\D^\mathrm{b}(A^e)$. It follows that $\DPic_{Z(A)}(A) = \Gamma(\Spec Z(A), \underline{\mathbb{Z}}) \times \Pic_{Z(A)}(A)$.
\end{proof}

Given any algebra automorphism $\sigma$ of $Z(A)$, there is a $k$-algebra $B$ with $Z(B)=Z(A)$ and a $k$-algebra isomorphism $\widetilde{\sigma}: A \to B$, such that $\sigma=\widetilde{\sigma}|_{Z(A)}$. This determines uniquely an isomorphism class of $B$ as $Z(A)$-algebra 
 (see \cite[page 9]{F} for the definition).
Then it induces an $\Aut_k(Z(A))$-action on the $Z(A)$-algebras which are isomorphic to $A$ as $k$-algebras. The image of $\Phi$ is just the stabilizer $\Aut_k(Z(A))_{[A]}$ of the derived equivalent class (relative to $Z(A)$) of $A$.

\begin{rk}
	Suppose that $A$ is an Azumaya algebra.
	
	(1) For any $k$-algebra $B$, $B$ is derived equivalent to $A$ if and only if $B$ is Morita equivalent to $A$. So $\Aut_k(Z(A))_{[A]}$ is also the stabilizer of the Brauer class of $A$ just as in \cite{Negron17}.
	
	(2) Notice that $\DPic_{Z(A)}(A) \cong \DPic_{Z(A)}(Z(A)) \cong \Gamma(\Spec Z(A), \underline{\mathbb{Z}}) \times \Pic_{Z(A)}(A)$ is an abelian group. Hence $\DPic_{k}(A)$ is a group extension of $\Aut_k(Z(A))_{[A]}$ by $\DPic_{Z(A)}(A)$ \cite[Theorem 1.1]{Negron17}. If $A$ is not an Azumaya algebra, $\DPic_{Z(A)}(A)$ may not be abelian.
\end{rk}

\section*{Acknowledgements} The authors are very grateful to the referee for the valuable comments and suggestions. This research is partially supported by the Natural Science Foundation of China (Grant No. 11771085) and the National Key Research and Development Program of China (Grant No. 2020YFA0713200).
The authors are also very grateful to Qixiao Ma for useful discussions.

\thebibliography{plain}

\bibitem{A17} B. Antieau, Twisted derived equivalence for affine schemes, Brauer groups and obstruction
problems, Progr. Math., vol. 320, Birkh$\ddot{a}$user, Springer, Cham, 2017, pp. 7--12.

\bibitem{ATV90} M. Artin, J. Tate and M. Van den Bergh, Some algebras associated to automorphisms of elliptic curves, The Grothendieck Festschrift, vol. I, Progress in Mathematics 86 (eds P. Cartier, L. Illusie, N. M. Katz, G. Laumon, Y. I. Manin and K. A. Ribet; Birkh\"{a}user, Boston, MA, 1990) 33--85.

\bibitem{ATV91} M. Artin, J. Tate, and M. Van den Bergh, Modules over regular algebras of dimension 3, Invent. Math. 106 (1991), no. 2, 335--388.


\bibitem{Bla73} W. D. Blair, Right Noetherian rings integral over their centers, J. Algebra 27 (1973), 187--198.

\bibitem{Bourbaki} N. Bourbaki, Commutative Algebra, Elements of Mathematics, Springer, 1989, English translation ed., Chapters 1--7.

\bibitem{BrownGordon03} K. Brown, I. Gordon, Poisson orders, symplectic reflection algebras and representation theory, J. Reine Angew. Math. 559 (2003), 193--216.


\bibitem{F} A. Fr\"{o}hlich, The Picard group of noncommutative rings, in particular of orders. Trans. Amer. Math. Soc. 180 (1973), 1--45.

\bibitem{Ja} N. Jacobson, Basic Algebra II, W. H. Freeman and Company, San Francisco, 1989.


\bibitem{KonZimm98} S. K\"{o}nig, A. Zimmermann, Derived Equivalences for Group Rings, Lect. Notes Math., vol. 1685, 1998.

%
%


\bibitem{MI} F. D. Meyer and E. Ingraham, Separable Algebras over Commutative Rings, 1971 edition, Springer, 1971.


\bibitem{Negron17} C. Negron, The derived Picard group of an affine Azumaya algebra. Sel. Math. (N.S.) 23(2) (2017), 1449--1468.

\bibitem{Rickard89} J. Rickard, Morita theory for derived categories, J. Lond. Math. Soc. 39 (1989), 436--456.

\bibitem{Rickard91} J. Rickard, Derived equivalences as derived functors, J. Lond. Math. Soc. 43 (1991), 37--48.

\bibitem{RouZi03} R. Rouquier, A. Zimmermann, Picard groups for derived module categories, Proc. Lond. Math. Soc.  87 (2003), 197--225.



\bibitem{WWY19} C. Walton, X.-T. Wang, M. Yakimov, Poisson geometry of PI three-dimensional Sklyanin algebras, Proc. Lond. Math. Soc. (3) 118 (2019), no. 6, 1471--1500.

\bibitem{Ye99} A. Yekutieli, Dualizing complexes, Morita equivalence and the derived Picard group of a ring, J. London Math. Soc. (2) 60 (3) (1999), 723--746.

\bibitem{Ye10} A. Yekutieli, Derived equivalences between associative deformations, J. Pure Appl. Algebra 214 (2010), 1469--1476.

\bibitem{Ye20} A. Yekutieli, Derived categories, Cambridge Studies in Advanced Mathematics, 183. Cambridge University Press, Cambridge, 2020. xi+607 pp.

\bibitem{Zim96} A. Zimmermann, Derived Equivalences of Orders, in Proceedings of the ICRA VII, Mexico, eds: Bautista, Martinez, de la Pena. Can. Math. Soc. Conference Proceedings 18, (1996), 721--749.
\end{document}